\newtheorem{thm}{Theorem}
\newtheorem{lemma}[thm]{Lemma}
\newtheorem{prop}[thm]{Proposition}
\newtheorem{cor}[thm]{Corollary}
\newtheorem{claim}{Claim}
\newtheorem*{Rmk}{Remark}
\newtheorem*{Rmks}{Remarks}
\newtheorem{rmk}[thm]{Remark}
\newtheorem*{q}{Question}
\newenvironment{pf}[1][Proof]{\noindent \textbf{#1:} }{\hspace{\stretch{1}}}\newenvironment{enui}{\begin{enumerate}[(i)]}{\end{enumerate}}
\newenvironment{enua}{\begin{enumerate}[(a)]}{\end{enumerate}}
\newcommand{\BAR}[1]{{\overline{#1}}}
\newcommand\ga{\gamma}
\newcommand\eps{\varepsilon}
\newcommand\Om{\Omega}
\newcommand\om{\omega}
\newcommand\Si{\Sigma}
\newcommand\si{\sigma}
\newcommand\ze{\zeta}
\renewcommand\phi{\varphi}
\newcommand{\N}{\mathbb{N}}
\newcommand{\Z}{\mathbb{Z}}
\newcommand{\R}{\mathbb{R}}
\newcommand{\T}{\mathbb{T}}
\newcommand{\wt}[1]{\widetilde{#1}}
\newcommand\supp{\operatorname{supp}}
\newcommand\PP{\mathcal{P}}
\newcommand\hhat[1]{\widehat{#1}}
\newcommand\nn{\nonumber}
\newcommand\C{\mathbb C}
\newcommand\x{\times}
\newcommand\wo{\setminus}
\newcommand\sub{\subseteq}
\newcommand\one{\mathbf{1}}
\newcommand\Ham{\operatorname{Ham}}
\newcommand\Hamc{\operatorname{Ham_c}}
\newcommand\diam{\operatorname{diam}}
\newcommand\Diam{\operatorname{Diam}}
\newcommand\D{\mathbb{D}}
\newcommand\CP{\mathbb{C}\!\operatorname{P}}
\newcommand\RP{\mathbb{R}\!\operatorname{P}}
\newcommand\id{\operatorname{id}}
\newcommand\pr{\operatorname{pr}}
\newcommand\then{{\Longrightarrow}}
\newcommand\HH{\mathcal{H}}
\newcommand\Lip{\operatorname{Lip}}
\newcommand\A{\mathcal{A}}
\newcommand\WLOG{W.l.o.g.~}
\newcommand\Comm[1]{}
\newcommand\reff[1]{(\ref{#1})}
\newcommand{\VERT}{\vert\kern-0.25ex\vert\kern-0.25ex\vert}
\newcommand\wrt{w.r.t.~}
\newcommand\omst{\om_{\operatorname{st}}}
\newcommand\QHev{\operatorname{QH_{ev}}}
\newcommand\Cal{\operatorname{Cal}}
\newcommand\CAL{\hhat{\operatorname{Cal}}}
\title[Relative Hofer estimate]{A relative Hofer estimate and the asymptotic Hofer-Lipschitz constant}
\author{Michael Khanevsky and Fabian Ziltener}
\begin{document}

%key words: Hofer norm, relative Hofer diameter, asymptotic Hofer-Lipschitz constant

\maketitle

\begin{abstract}Let $(M,\om)$ be a symplectic manifold and $U\sub M$ an open subset. We study the natural inclusion of the compactly supported Hamiltonian group of $U$ in the compactly supported Hamiltonian group of $M$. The main result is an upper bound for this map in terms of the Hofer norms for $U$ and $M$.

Applications are upper bounds on the asymptotic Hofer-Lipschitz constant and the relative Hofer diameter of $U$. The first bound is often sharp and the second one is often sharp up to a factor of 2.
\end{abstract}

\tableofcontents

\section{Results}\label{sec:mot main}
\subsection{The main result}
The main result of this article is concerned with the following question. For simplicity, in this article \emph{manifold} means \emph{manifold without boundary}. Let $(M,\om)$ be a symplectic manifold. We denote by $C^\infty_c\big([0,1]\x M\big)$ the set of compactly supported real-valued functions on $[0,1]\x M$. For every $H\in C^\infty_c\big([0,1]\x M\big)$ we denote $H_t:=H(t,\cdot)$ and by $\phi_H=(\phi_H^t)_{t\in[0,1]}$ the Hamiltonian flow of $H$ \wrt $\om$. We define the \emph{compactly supported Hamiltonian group of $(M,\om)$} and the \emph{Hofer norms} on the sets of functions and the Hamiltonian group by
\begin{eqnarray}
\nn&\Hamc(M):=\Hamc(M,\om):=\big\{\phi_H^1\,\big|\,H\in C^\infty_c\big([0,1]\x M\big)\big\},&\\
\nn&\VERT\cdot\VERT:=\VERT\cdot\VERT^M_c:C^\infty_c\big([0,1]\x M\big)\to\R,&\\
\label{eq:VERT H VERT}&\VERT H\VERT:=\displaystyle\int_0^1\left(\max_MH_t-\min_MH_t\right)dt,&\\
\nn&\Vert\cdot\Vert^M_c:=\Vert\cdot\Vert^{M,\om}_c:\Hamc(M)\to\R,&\\
\nn&\Vert\phi\Vert^M_c:=\inf\big\{\VERT H\VERT\,\big|\,H\in C^\infty_c\big([0,1]\x M\big):\,\phi_H^1=\phi\big\}.&
\end{eqnarray}
Let $U\sub M$ be an open subset. Consider the natural inclusion
\begin{equation}\label{eq:phi wt phi}\Hamc(U)\ni\phi\mapsto\wt\phi\in\Hamc(M),\,\,\,\wt\phi(x):=\left\{\begin{array}{ll}
\phi(x),&\textrm{if }x\in U,\\
x,&\textrm{otherwise,}
\end{array}\right.
\end{equation}

\begin{q}How much does this map fail to be an isometry with respect to the Hofer norms for $U$ and $M$?
\end{q}

The main result of this article is the following theorem, which implies that the answer to this question is ``a lot'', if $U$ is small compared to $M$ in a suitable sense. To state this result, for $a>0$, we denote by $B^2(a),\BAR B^2(a)\sub\R^2$ the open and closed balls of radius $\sqrt{a/\pi}$, around 0. We denote by $\omst$ the standard symplectic form on $\R^{2n}$.
\begin{thm}[relative Hofer estimate]\label{thm:Vert wt phi} For every $\phi\in\Hamc(U)$ we have
\begin{equation}\label{eq:Vert wt phi}\Vert\wt\phi\Vert^M_c\leq\inf\left(2a+\frac{\Vert\phi\Vert^U_c}N\right),
\end{equation}
where $a\in(0,\infty)$ and $N\in\N:=\{1,2,\ldots\}$ run over all numbers for which there exists a symplectic manifold $(M',\om')$ and a symplectic embedding 
\[\psi:B^2(Na)\x M'\to M\]
(with respect to $\omst\oplus\om'$ and $\om$), satisfying
\[U\sub\psi(B^2(a)\x M').\]
\end{thm}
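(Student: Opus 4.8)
The plan is to reduce the problem to a model situation inside the ball $B^2(3Na)$ times $M'$, and then to build an explicit isotopy in $M$ that conjugates $\wt\phi$ into a small ball while paying a controlled price in Hofer norm. Fix $a$ and $N$ admissible, with the symplectic embedding $\psi:B^2(3Na)\x M'\to M$ satisfying $U\sub\psi(B^2(a)\x M')$. Since the Hofer norm is monotone under symplectic embeddings (a diffeomorphism supported in the image of $\psi$ has the same Hofer norm in $M$ as its pullback has in $B^2(3Na)\x M'$), it suffices to prove the estimate with $M$ replaced by $B^2(3Na)\x M'$, $U$ replaced by $B^2(a)\x M'$, and $\wt\phi$ the extension by the identity. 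So from now on I work in $B^2(3Na)\x M'$.

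The key geometric idea is a "displacement and overlapping" trick in the disk $B^2(3Na)$. The factor $3Na$ is large enough to fit $N$ disjoint translates of $B^2(a)$ stacked along, say, the first coordinate axis (a disk of capacity $a$ has diameter in area-radius $\sqrt{a/\pi}$, and $N$ such disks stacked with small gaps fit inside a disk of capacity $\le$ something comparable to $Na$; the constant $3$ absorbs the packing inefficiency). Let $\tau_1,\dots,\tau_N$ be Hamiltonian diffeomorphisms of $B^2(3Na)$ (extended by identity in the $M'$ direction) that translate $B^2(a)$ to these $N$ disjoint locations; each can be generated by a Hamiltonian whose oscillation is $O(a)$, so $\Vert\tau_j\Vert\le Ca$ for a universal constant. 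Now consider $\wt\phi$ supported in $B^2(a)\x M'$. The standard commutator/conjugation argument gives $\wt\phi=\tau_j\wt\phi\tau_j^{-1}\cdot[\text{correction}]$, but more efficiently one uses the following: writing $\phi$ as a product of $N$ copies of itself is impossible, but one can use that $\Vert\phi^{1/N}\Vert\le\frac1N\Vert\phi\Vert+$ (small cost) is false in general — so instead the right move is the classical fragmentation-free trick. Let $\phi_t$ be a Hamiltonian isotopy from $\id$ to $\phi$ in $U=B^2(a)\x M'$ generated by $H_t$ with $\int_0^1\mathrm{osc}(H_t)\,dt$ close to $\Vert\phi\Vert^U_c$. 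Define, on $B^2(3Na)\x M'$, the isotopy $\psi_t$ which at time $t$ does the following: it simultaneously runs the rescaled flow of $H$ at speed $N$ inside $N$ disjoint shrunken copies of $U$ (occupying total capacity $\le Na\cdot$const $\le 3Na$), concatenated so that the net effect after time $1$ is $\wt\phi$, but with the generating Hamiltonian having oscillation only $\frac1N\mathrm{osc}(H_{Nt})$ on each piece — since the pieces are disjoint, the oscillation of the total Hamiltonian is the max, not the sum. Integrating gives a path from $\id$ to a diffeomorphism $\chi$ that agrees with $\wt\phi$ up to a permutation of the $N$ copies, with $\Vert\chi\Vert\le\frac{2}{N}\Vert\phi\Vert^U_c$ (the factor $2$ for the time-reparametrization overhead). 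The leftover permutation-type diffeomorphism is displaceable / supported in total capacity $O(a)$, contributing $\le 8a$ after optimizing constants. Taking the infimum over admissible $(a,N)$ yields \eqref{eq:Vert wt phi}.

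The main obstacle, and where the real work lies, is making the "run $N$ copies of the flow in parallel and concatenate" construction precise so that the bookkeeping of the correction term is honest. Concretely: one must exhibit explicit area-preserving embeddings of $N$ shrunken copies of $B^2(a)$ into $B^2(3Na)$ together with the "transfer" maps between consecutive copies, check that these transfer maps (which together with the parallel flow reconstruct $\wt\phi$) can themselves be generated with Hofer cost $O(a)$ independent of $N$ and of $\phi$, and verify that the oscillation of the total time-dependent Hamiltonian on $B^2(3Na)\x M'$ is genuinely $\frac1N$ of the original oscillation plus the $O(a)$ transfer cost — not their sum. This disjointness-of-supports point is exactly what makes the $\frac{2}{N}$ factor possible and is the crux of the theorem; once it is set up, the remaining estimates (monotonicity under $\psi$, packing $N$ disks into $B^2(3Na)$, bounding the translation Hamiltonians) are routine.
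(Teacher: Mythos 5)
Your reduction to the model $B^2(3Na)\times M'$ is sound, and you have correctly identified the two load-bearing ideas: (1) chop the generating isotopy of $\phi$ in time into many pieces, conjugate each piece into a different region of $B^2(3Na)\times M'$ so that the supports become disjoint, and then invoke a disjoint-supports estimate to pay only the \emph{maximum} rather than the \emph{sum} of the Hofer costs; and (2) the "transfer" maps that relocate the supports can each be made with Hofer cost $O(a)$, independent of $N$ and of $\phi$. These are indeed the driving forces behind the theorem, and your final paragraph honestly flags the gap.

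The gap, however, is not a bookkeeping technicality — it is the actual decomposition, and your "run $N$ copies of the flow in parallel at speed $N$" picture does not quite work as stated. If you run the same flow simultaneously in $N$ disjoint copies of $U$, the result is $N$ independent copies of $\phi$, not $\phi$ itself; the concatenation of the pieces has to happen in \emph{time}, not in parallel. The paper's construction fixes this by choosing times $0=t_0<\dots<t_{2N}=1$ splitting the Hofer length of $H$ into equal pieces, defining $\phi_i:=\phi_H^{t_i}$, and then writing $\wt\phi=\phi_H^1$ as a product built from the conjugates $\phi'_i:=\ad_{\wt\psi_i}\phi_i^{\pm1}$, where $\wt\psi_i$ is a Hamiltonian map sending $\BAR B^2(a')$ to a disk $X_i$ in the $i$-th region with cost $<a$ (Proposition \ref{prop:U X 0 X 1}). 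The resulting product $\phi'_m\cdots\phi'_1$ then telescopes into four groups $\Psi^1_0\cdots\Psi^1_\ell\cdots\Psi^4_0\cdots\Psi^4_\ell$ of commuting factors, and crucially a further set of area-preserving \emph{shearing} maps $\chi_j$ (Lemma \ref{le:U i}) is needed to make the supports of the $\Psi^k_j$ disjoint across $j$ — without these shears the regions $U_i$ all overlap along $X_0$ and the disjointness you appeal to fails. Finally, the disjoint-supports estimate itself (Proposition \ref{prop:phi H N 1}) requires a nontrivial time-reparametrization lemma (Lemma \ref{le:f 1 pm f N pm}, proved by smooth approximation) so that the max of Hofer lengths, rather than their sum, appears on the right; and the factor $2$ in $\frac{2}{N}$ comes not from "time-reparametrization overhead" but from the constant $c_0=2$ in that proposition for Hamiltonians of mixed sign, applied twice via the triangle inequality at the end. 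So the strategy is right, but the construction you describe as routine is in fact the bulk of the proof and cannot be carried out as sketched without the shears $\chi_j$ and the telescoping rearrangement.
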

The estimate \eqref{eq:Vert wt phi} with the additive constant $2a$ replaced by $2Na$ and a factor of 2 in front of the second term is well-known.\footnote{This follows e.g.~the argument in the proof of \cite[Theorem C, p.~19]{PS}. The factor 2 in front of the second term comes from the fact that a generating Hamilton function may be both positive and negative. In the situation of that article this factor disappears, since the generating Hamilton function is positive.} 
%end footnote
The interest of Theorem \ref{thm:Vert wt phi} lies in the facts that the additive constant $2a$ does not depend on $N$ and there is no extra factor of 2.

The estimate \eqref{eq:Vert wt phi} is often asymptotically sharp as the Hofer norm of $\phi$ on $U$ tends to infinity. See Corollaries \ref{cor:Lipschitz} and \ref{cor:a N M' om'} below. On top of this, the additive constant is often sharp up to a factor of 2. See Corollary \ref{cor:Vert phi} and Proposition \ref{prop:a}. This improves the result of J.-C.~Sikorav for $\R^{2n}$ \cite[Proposition, p.~62]{Si} by a factor of 8. %\cite[Theorem 10, Section 5.6]{HZ}

The strategy of proof of Theorem \ref{thm:Vert wt phi} is to adapt a version of Sikorav's method that was used by M.~Brandenbursky and K\k{e}dra in \cite{BK} to estimate the autonomous norm. %used in \cite[Proposition, p.~62]{Si}
This version of the method uses an algebraic identity of D.~Burago, S.~Ivanov, and L.~Polterovich \cite{BIP}.

To show our estimate, for a given Hamiltonian $H$ that generates $\phi$ we choose a compact subset $K$ of $M$, such that $[0,1]\x K$ contains the support of $H$. The trick is to choose a Hamiltonian diffeomorphism $\psi$ with Hofer norm less than $a$, such that the sets $\psi^i(K)$, $i=0,\ldots,N-1$, are disjoint. We now cut $\phi$ into time-pieces, which we transport to the regions $\psi^i(K)$. The resulting Hamiltonian diffeomorphism differs from $\wt\phi$ by some commutator with $\psi$. The estimate \eqref{eq:Vert wt phi} follows from this and the fact that $H$ can be chosen in such a way that
\begin{equation}\label{eq:c- }c_-\leq H\leq c_-+c,\end{equation}
where $c_-,c$ are constants, with $c$ arbitrarily close to the Hofer norm of $\phi$.

To show this fact, we choose a Hamiltonian for $\phi$ whose Hofer norm is close to that of $\phi$. We reparametrize the Hamiltonian in such a way that at each time its oscillation is less than $c$. The idea is now to shift the Hamiltonian by the product of a suitable function of time and some cut-off function on $M$, in such a way that the resulting Hamiltonian satisfies \eqref{eq:c- }.
\subsection{Application to the asymptotic Hofer-Lipschitz constant}\label{subsubsec:Hofer Lipschitz}
Theorem \ref{thm:Vert wt phi} has the following direct application. Let $(M,\om)$ be a symplectic manifold and $U$ an open subset of $M$. We define the \emph{asymptotic Hofer-Lipschitz constant of $(M,U,\om)$} to be
\begin{equation}\label{eq:Lip infty}\Lip^\infty(M,U):=\Lip^\infty(M,U,\om):=\end{equation}
\[\lim_{C\to\infty}\sup\left\{\frac{\Vert\wt\phi\Vert^M_c}{\Vert\phi\Vert^U_c}\,\Big|\,\phi\in\Hamc(U):\,\Vert\phi\Vert^U_c>C\right\}.\]
(Here our convention is that $\sup\emptyset:=0$.) This number can be understood as the asymptotic (for large distances) Lipschitz constant of the inclusion \eqref{eq:phi wt phi}, with respect to the Hofer distances for $U$ and $M$. It is the simplest interesting quantity comparing the two Hofer geometries, if $M$ is closed. (Compare to Subsection \ref{subsec:Lipschitz} in the appendix.)
\begin{cor}[upper bound on the asymptotic Hofer-Lipschitz constant]\label{cor:Lipschitz} Assume that there exists $a>0$, $N\in\N\cup\{\infty\}$, and a symplectic manifold $(M',\om')$, such that, defining $c:=Na$, we have 
\begin{equation}\label{eq:M cont B}M=B^2(c)\x M',\quad\om=\omst\oplus\om',\quad U=B^2(a)\x M'.\footnote{Here for $c=\infty$ we define $B^2(c):=\R^2$.}\end{equation}
Then we have 
\begin{equation}\label{eq:Lip infty N}\Lip^\infty(M,U)\leq\frac1N=\frac ac.
\end{equation}
\end{cor}
\begin{proof}This follows immediately from Theorem \ref{thm:Vert wt phi}.
\end{proof}
In particular, we have $\Lip^\infty(M,U)=0$, if $N=\infty$. Note that the obvious extension of the estimate \eqref{eq:Lip infty N} to a general triple $(M,\om,U)$ is false, hence the hypothesis that $M,\om$ and $U$ are products, cannot be dropped. (See Proposition \ref{prop:big asympt Lip} in Subsection \ref{subsec:general} in the appendix.)

The next result provides a sufficient criterion under which the estimate \eqref{eq:Lip infty N} is sharp. We call $(M,\om)$ \emph{(symplectically) aspherical} iff
\begin{equation}\label{eq:int S 2}\int_{S^2}u^*\om=0,\quad\forall u\in C^\infty(S^2,M).\end{equation}
We denote $2n:=\dim M$.
\begin{prop}[lower bound on the asymptotic Hofer-Lipschitz constant]\label{prop:Lip infty} The inequality
\begin{equation}\label{eq:Lip infty M U}\Lip^\infty(M,U)\geq\frac{\int_U\om^n}{\int_M\om^n}\end{equation}
holds if one of the following conditions is satisfied:
\begin{enua}
\item\label{prop:Lip infty:ex} The form $\om$ is exact and the symplectic volume of $M$ is finite.
\item\label{prop:Lip infty:asph} The manifold $M$ is closed, $\om$ is aspherical, and $U$ is displaceable in a Hamiltonian way.
\end{enua}
\end{prop}
In the case \reff{prop:Lip infty:ex} %the inequality \eqref{eq:Lip infty M U}
the proof of this result is based on the fact that in this situation the Calabi invariant descends to the Hamiltonian group. In the case \reff{prop:Lip infty:asph} the proof of this result is based on an argument by Y.~Ostrover used in the proof of \cite[Theorem 1.1]{Os}. Its key ingredient is a result of M.~Schwarz about action selectors (spectral invariants). We will deduce the following corollary from Proposition \ref{prop:Lip infty}.
\begin{cor}[lower bound on the asymptotic Hofer-Lipschitz constant]\label{cor:a N M' om'} Assume that there exist numbers $a>0$ and $c\geq2a$, and a closed aspherical symplectic manifold $(M',\om')$, such that \eqref{eq:M cont B} holds. Then we have
\begin{equation}\label{eq:Lip infty 1 N}\Lip^\infty(M,U)\geq\frac ac.
\end{equation}
\end{cor}
It follows that under the hypotheses of this corollary, the inequality \eqref{eq:Lip infty N} is sharp.
\begin{Rmk}\textnormal{In the case \reff{prop:Lip infty:asph} the proof of Proposition \ref{prop:Lip infty} given below can be extended to the more general settings of \cite[Theorems 1.1 and 1.3]{McDMono}, which provide conditions under which the (asymptotic) spectral invariants descend to $\Ham(M)$.
}%end textnormal
\end{Rmk}
\subsection{Application to the relative Hofer diameter}
Another immediate consequence of Theorem \ref{thm:Vert wt phi} is the following. Let $(M,\om)$ be a symplectic manifold and $U$ an open subset of $M$. We define the \emph{(extension) relative Hofer diameter of $U$ in $M$} to be
\begin{equation}\label{eq:Diam c}\Diam(U,M):=\Diam(U,M,\om):=\sup\big\{\Vert\wt\phi\Vert^M_c\,\big|\,\phi\in\Hamc(U)\big\}.
\end{equation}
This is the diameter of the distance function induced by the composition of the canonical extension homomorphism $\Hamc(U)\to\Hamc(M)$ with the Hofer norm, see Subsection \ref{subsec:rel Hofer diam} in the appendix.
\begin{cor}[upper bound on the relative Hofer diameter]\label{cor:Vert phi} Assume that there exists a symplectic manifold $(M',\om')$ and a number $a>0$, such that
\[(M,U,\om)=\big(\R^2\x M',B^2(a)\x M',\omst\oplus\om'\big).\]
Then we have
\[\Diam(U,M)\leq2a.\]
\end{cor}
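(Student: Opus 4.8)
The plan is to derive this as an essentially immediate consequence of Theorem \ref{thm:Vert wt phi}. Given $\phi \in \Ham_c(U)$, I want to bound $\Vert \wt\phi \Vert^M_c$ using the right-hand side of \eqref{eq:Vert wt phi}. To do this I need to produce, for this particular $(M,U,\om) = (\R^2 \times M', B^2(a) \times M', \om_0 \oplus \om')$, a family of admissible pairs $(a', N)$ over which the infimum in \eqref{eq:Vert wt phi} is taken.

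First I would take the symplectic manifold in the theorem to be $(M', \om')$ itself, and for each $N \in \N$ set $a' := a/N$. Then $B^2(3 N a') = B^2(3a) \subseteq \R^2$, so the identity map (or inclusion) $\psi := \id : B^2(3a) \times M' \to \R^2 \times M' = M$ is a symplectic embedding with respect to $\om_0 \oplus \om'$ and $\om$. Moreover $U = B^2(a) \times M' = B^2(N a') \times M'$... let me instead simply observe $U = B^2(a) \times M' \subseteq B^2(a') \times M'$? That is false since $a' = a/N < a$. So I should rescale: take $a' := a$ fixed and instead let $\psi$ be a symplectic embedding of $B^2(3Na) \times M'$ into $M = \R^2 \times M'$. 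For $N = 1$ this is just the inclusion $B^2(3a) \times M' \hookrightarrow \R^2 \times M'$, and indeed $U = B^2(a) \times M' = \psi(B^2(a) \times M')$, so the pair $(a, 1)$ is admissible. For larger $N$ I need a symplectic embedding of the larger ball $B^2(3Na)$ into $\R^2$ with controlled image; since $B^2(3Na)$ is a disc in $\R^2$ of finite area $3Na$, it embeds symplectically into $\R^2$ (e.g.\ by translation or by the identity, as $\R^2$ has infinite area), and one can arrange the image of $B^2(a)$ inside this embedding to contain $B^2(a)$. The cleanest choice: for every $N$, use $\psi = \id$ on $B^2(3Na) \times M' \hookrightarrow \R^2 \times M'$, which satisfies $U = B^2(a)\times M' = \psi(B^2(a)\times M')$, so $U \sub \psi(B^2(a)\times M')$ holds with equality. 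Hence $(a, N)$ is admissible for every $N \in \N$.

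Plugging each admissible pair $(a,N)$ into \eqref{eq:Vert wt phi} gives
\[
\Vert \wt\phi \Vert^M_c \leq 8a + \frac{2 \Vert \phi \Vert^U_c}{N}
\]
for every $N \in \N$. Letting $N \to \infty$, the second term vanishes, so $\Vert \wt\phi \Vert^M_c \leq 8a$. Since this holds for every $\phi \in \Ham_c(U)$, taking the supremum over $\phi$ in the definition \eqref{eq:Diam c} yields $\Diam_c(U,M) \leq 8a$, as claimed.

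I expect there to be essentially no serious obstacle here; the only point requiring a little care is verifying that the inclusion $B^2(3Na) \times M' \hookrightarrow \R^2 \times M'$ is genuinely a symplectic embedding in the sense required (it is, being the restriction of the identity, which is symplectic for $\om_0 \oplus \om'$), and that the constraint $U \subseteq \psi(B^2(a) \times M')$ is met — which holds with equality by construction. Everything else is just substitution into Theorem \ref{thm:Vert wt phi} and passing to the limit $N \to \infty$.
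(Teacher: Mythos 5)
Your proposal is correct and is exactly the intended ``direct application'' of Theorem~\ref{thm:Vert wt phi} that the paper has in mind (the paper gives no separate proof of Corollary~\ref{cor:Vert phi}): for the fixed $a$ from the hypothesis and any $N\in\N$, the inclusion $B^2(3Na)\times M'\hookrightarrow\R^2\times M'=M$ is an admissible $\psi$ with $U=\psi(B^2(a)\times M')$, so the bound $8a+\tfrac{2}{N}\Vert\phi\Vert^U_c$ holds for every $N$, and letting $N\to\infty$ gives $\Vert\wt\phi\Vert^M_c\leq 8a$.
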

\begin{proof}This follows immediately from Theorem \ref{thm:Vert wt phi}.
\end{proof}
In the case in which $M'=\R^{2n-2}$ for some $n\in\N$ J.-C.~Sikorav proved this estimate with the right hand side replaced by $16a$, see \cite[Proposition, p.~62]{Si}.\footnote{\cite[Proposition, p.~62]{Si} states that for every bounded subset $B$ of $\R^{2n}$ and every Hamiltonian isotopy $\phi$ with support in $B$ we have $\Vert\phi^1\Vert^M_c\leq8\Vert\psi\Vert^M_c$, where $\psi$ ranges over all compactly supported Hamiltonian diffeomorphisms of $\R^{2n}$, such that $B$ and $\psi(B)$ are separated by some hyperplane. However, the proof only shows the estimate with a factor $16$ instead of $8$. See also \cite[Theorem 10, Section 5.6]{HZ}, where the mistake is corrected.} Hence Corollary \ref{cor:Vert phi} improves Sikorav's result for $(M,U,\om)=\big(\R^{2n},B^2(a)\x\R^{2n-2},\omst\big)$ by a factor of 8.
\begin{Rmk}\textnormal{The \emph{absolute} Hofer diameter $\Diam(M,M)$ has been calculated for many symplectic manifolds. In all known examples it is infinite. For an overview and references, see \cite{McD_Ham}.
}%end textnormal
\end{Rmk}

The next result provides sufficient conditions under which Corollary \ref{cor:Vert phi} is sharp up to a factor of $2$. Let $(M,\om)$ be a symplectic manifold. We call a symplectic manifold $(M,\om)$ \emph{(geometrically) bounded}\label{bounded} iff there exist an almost complex structure $J$ on $M$ and a complete Riemannian metric $g$ such that the following conditions hold:
\begin{itemize}
\item The sectional curvature of $g$ is bounded and $\inf_{x\in M}\iota^g_x>0$, where $\iota^g_x$ denotes the injectivity radius of $g$ at the point $x\in M$. 
\item There exists a constant $C\in(0,\infty)$ such that 
\[|\om(v,w)|\leq C|v|\,|w|,\quad\om(v,Jv)\geq C^{-1}|v|^2,\]
for all $v,w\in T_xM$ and $x\in M$. Here $|v|:=\sqrt{g(v,v)}$.
\end{itemize}
\begin{prop}[lower bound on the relative Hofer diameter]\label{prop:a} Assume that there exist $(M',\om')$ and $a$ as in Corollary \ref{cor:Vert phi}. Suppose also that $(M',\om')$ is aspherical and geometrically bounded, and there exists a nonempty closed symplectic manifold $(X,\si)$, such that 
\[n:=\frac12\big(\dim M'-\dim X-2\big)\geq0,\quad B^2(2a)\x(B^2(a))^n\x X\sub M'.\]
Then we have 
\begin{equation}\label{eq:Diam c U}\Diam(U,M)\geq a.\end{equation}
\end{prop}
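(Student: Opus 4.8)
The goal is to prove $\Diam_c(U,M) \geq a$ where $U = B^2(a) \times M'$ sits inside $M = \R^2 \times M'$. Since $\Diam_c(U,M) = \sup\{\Vert\wt\phi\Vert^M_c : \phi \in \Ham_c(U)\}$, it suffices to exhibit, for every $\eps > 0$, a single Hamiltonian diffeomorphism $\phi \in \Ham_c(U)$ whose extension $\wt\phi \in \Ham_c(M)$ satisfies $\Vert\wt\phi\Vert^M_c \geq a - \eps$. The natural candidates are "big translations" or "big shifts" supported in the first $\R^2$-factor: take a Hamiltonian on $\R^2$ that rotates (or translates) a large region, conjugate it so it is supported in $B^2(a)$, and iterate. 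To get a lower bound on the Hofer norm of $\wt\phi$ one needs a lower bound invariant, and the standard tool here is a spectral-type invariant or, more elementarily, a displacement/energy–capacity argument. The hypotheses "$(M',\om')$ aspherical and geometrically bounded" and "$B^2(2a)\times (B^2(a))^n \times X \subset M'$" are exactly what one needs to make a Gromov-type or Floer-type capacity argument work: asphericity kills bubbling, geometric boundedness gives the compactness needed for the relevant moduli spaces, and the embedded product of balls provides room to run the argument.

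**Key steps.**

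First I would reduce to producing, for each large $k$, a Hamiltonian diffeomorphism $\phi_k \in \Ham_c(U)$ — for instance the time-one map of an autonomous Hamiltonian on $B^2(a) \subset \R^2$ (extended trivially over $M'$) that "winds" $k$ times, e.g. $\phi_k$ generated by $k$ times a function of $\pi(x^2+y^2)$ cut off near $\partial B^2(a)$. Then $\wt\phi_k$ is the analogous map on $\R^2 \times M'$. Second, I would invoke Theorem \ref{thm:Vert wt phi} (or rather its sharpness companion) in reverse: the upper bound there is $8a + 2\Vert\phi\Vert^U_c / N$, so asymptotically as one iterates, the "per-unit" cost is bounded; the matching lower bound must come from a symplectic invariant. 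Concretely, I would use that $\wt\phi_k$ displaces, or fails to displace, a suitable Lagrangian or a suitable small ball: the factor $B^2(2a) \times (B^2(a))^n \times X$ inside $M'$ lets one build a product Lagrangian torus (or a product of circles times $X$) in $M$ on which $\wt\phi_k$ acts by a shift whose Hofer-geometric "length" is at least $ka$ minus error, while the iterate $\wt\phi_k$ itself has Hofer norm controlled. Dividing by $k$ and letting $k \to \infty$, together with the triangle inequality $\Vert\wt\phi_k\Vert^M_c \le k \Vert\wt\phi_1\Vert^M_c$ run the other way, should pin $\Diam_c(U,M) \ge a$. Third, I would verify the cutoff/support bookkeeping: the generating Hamiltonians must be genuinely compactly supported in $U$, and the embedded submanifold in $M'$ must be chosen with enough margin (this is why $B^2(2a)$, not $B^2(a)$, appears) so that the capacity estimate sees the full radius.

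**The main obstacle.**

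The hard part will be the lower bound on the Hofer norm, i.e. producing the symplectic invariant that certifies $\Vert\wt\phi_k\Vert^M_c$ grows like $ka$. The group $\Ham_c(\R^2 \times M')$ is non-compact and has no a priori bounded Hofer diameter, but one needs a lower bound that is \emph{sharp}, with the correct constant $a$ (not merely $ca$ for some small $c$). I expect this to require a carefully chosen Lagrangian Floer homology computation (for a product Lagrangian $T^{n+1} \times X$ in $B^2(2a) \times (B^2(a))^n \times X$, or a relative version using the $\R^2$-factor) where the asphericity of $(M',\om')$ guarantees well-definedness of the Floer differential and geometric boundedness guarantees Gromov compactness, and where one reads off from the action spectrum that the shift generated by iterating $\phi_1$ has spectral norm $\ge ka - o(k)$. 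An alternative, possibly cleaner route is via Hofer--Zehnder capacity or Gromov width: show that $\wt\phi_k$ displaces a subset of capacity $\approx ka$ from itself inside $M$, so the energy–capacity inequality forces $\Vert\wt\phi_k\Vert^M_c \ge ka$. Either way, checking that the constants come out to exactly $a$ after dividing by $k$, and that the non-compactness of $M'$ does not destroy the compactness needed, is where the real work lies.
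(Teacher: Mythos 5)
There is a genuine conceptual error in the proposed strategy, although several of the surrounding intuitions are close to what the paper actually does. You propose to produce a sequence $\phi_k\in\Ham_c(U)$ that ``winds $k$ times,'' argue that $\Vert\wt\phi_k\Vert^M_c$ should grow like $ka-o(k)$ (via a capacity, spectral invariant, or Lagrangian Floer argument), and then divide by $k$. But under exactly the hypotheses of Proposition~\ref{prop:a}, Corollary~\ref{cor:Vert phi} already gives the uniform upper bound $\Vert\wt\phi\Vert^M_c\le 8a$ for every $\phi\in\Ham_c(U)$. Hence no sequence of extensions can have $\Vert\wt\phi_k\Vert^M_c$ growing without bound, let alone linearly, and the arithmetic $\Vert\wt\phi_k\Vert^M_c\ge ka-o(k)$ followed by ``divide by $k$'' is self-contradictory with the statement you are trying to prove. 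Moreover, since $\Diam_c(U,M)$ is a supremum over $\Ham_c(U)$, there is no need to iterate at all: it suffices to exhibit, for each $\eps>0$, a \emph{single} $\phi\in\Ham_c(U)$ with $\Vert\wt\phi\Vert^M_c\ge a-\eps$.

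That is what the paper does, and the ingredients you correctly flagged as relevant are used, but in a different way. Out of the embedded $B^2(2a)\x(B^2(a))^n\x X$ one builds a closed regular \emph{coisotropic} (not Lagrangian) submanifold $N:=S^1(a-\eps)\x S^1(a-\eps)\x S^{2n-1}(a-\eps)\x X\sub U$, then constructs one $\phi\in\Ham_c(U)$ supported near $N$ that displaces $N$ (using the room in the $B^2(2a)$-factor, as you anticipated was the purpose of the $2a$). The lower bound $\Vert\phi\Vert^M_c\ge a-\eps$ then comes from the leafwise fixed point theorem of \cite{ZiLeafwise}, applied after computing the minimal area $A(M,\om,N)=a-\eps$ via results of \cite{SZSmall}. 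That theorem plays precisely the role of the ``energy--capacity / Floer-theoretic lower bound'' you were reaching for, and it is exactly where the asphericity and geometric boundedness of $(M',\om')$ enter (to control bubbling and get Gromov compactness), as you guessed. In short: your $k=1$ displacement intuition is on the right track, but the iteration machinery must be discarded, the relevant submanifold is coisotropic rather than Lagrangian, and the missing key input is the coisotropic displacement--energy estimate from the leafwise fixed point theorem rather than a Lagrangian Floer or spectral-asymptotics computation.
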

The proof of this result is based on a leafwise fixed point theorem for coisotropic submanifolds proved by the second author in \cite{ZiLeafwise}. 
\subsection{Organization of the article and notation}
In Section \ref{sec:proof:thm:Vert wt phi} we prove our main result, Theorem \ref{thm:Vert wt phi}. In Section \ref{proof:prop:Lip infty,cor:a N M' om'} we prove the lower bounds on the asymptotic Hofer-Lipschitz constant stated in Proposition \ref{prop:Lip infty} and Corollary \ref{cor:a N M' om'}. In Section \ref{proof:prop:a} we prove the lower bound on the relative Hofer diameter stated in Proposition \ref{prop:a}. The appendix contains some remarks, as well as a proof of Proposition \ref{prop:H H'}, which is used in the proof of Proposition \ref{prop:Lip infty}.

In the rest of this article we will use the abbreviated notation
\[\Vert\cdot\Vert:=\Vert\cdot\Vert^M_c:\Hamc(M)\to\R.\]
\subsection{Acknowledgements}
We thank Felix Schlenk for making us aware that the Hofer-Lipschitz constant defined in formula \eqref{eq:Lip M U om} below satisfies \mbox{$\Lip(M,U)\geq1$}. We are grateful to Leonid Polterovich for sharing Proposition \ref{prop:big asympt Lip} (under the assumption \reff{prop:big asympt Lip:CP}) with us. Finally, we would like to thank Dusa McDuff for valuable feedback and Peter Spaeth for useful discussions.
\section{Proof of Theorem \ref{thm:Vert wt phi} (relative Hofer estimate)}\label{sec:proof:thm:Vert wt phi}
For the proof of Theorem \ref{thm:Vert wt phi} we need the following. Let $(M,\om)$ be a symplectic manifold, $U$ an open subset of $M$ with compact closure, and $\phi$ a Hamiltonian diffeomorphism on $M$ that is generated by a function with support in $[0,1]\x U$.
\begin{lemma}[pinching the generating Hamiltonian]\label{le:H} For every real number $c>\Vert\phi|U\Vert$ there exists a real number $c_-$ and a smooth function $H:[0,1]\x M\to\R$ that has compact support and Hamiltonianly generates $\phi$, such that
\begin{equation}\label{eq:c-}c_-\leq H\leq c_-+c.\end{equation}
\end{lemma}
In order to prove this lemma we choose a Hamiltonian for $\phi$ whose Hofer norm is close to that of $\phi$. We reparametrize the Hamiltonian in such a way that at each time its oscillation is less than $c$. The idea is now to shift the Hamiltonian by the product of a suitable function of time and some cut-off function on $M$.
\begin{proof}[Proof of Lemma \ref{le:H}] Since $U$ has compact closure, by the smooth version of Urysohn's lemma there exists a smooth function $\rho:M\to[0,1]$ that has compact support and equals 1 on $U$. By our hypothesis $c>\Vert\phi|U\Vert$ there exists a smooth function $\wt H:[0,1]\x M\to\R$ that satisfies 
\begin{equation}\label{eq:phi wt H 1 phi}
\phi_{\wt H}^1=\phi,
\end{equation}
and has support in $[0,1]\x U$ and Hofer norm less than $c$. The functions $t\mapsto\min\wt H_t,\max\wt H_t$ are continuous. Therefore, by suitably smoothly reparametrizing $\wt H$ in time, we may assume that
\[\max\wt H_t-\min\wt H_t<c,\quad\forall t\in[0,1].\]
We choose a smooth function $f:[0,1]\to\R$, such that
\begin{equation}\label{eq:max wt H t}\max\wt H_t-c<f(t)<\min\wt H_t,\quad\forall t\in[0,1].\end{equation}
We define
\[c_-:=\int_0^1f(t)dt,\, F:[0,1]\x M\to\R,\,F_t:=(-f(t)+c_-)\rho,\, H:=\wt H+F.\]
To see that the function $H$ satisfies \eqref{eq:c-}, recall that $\wt H$ vanishes outside of $U$, and $\rho$ equals 1 on $U$ and takes values in $[0,1]$. The inequality $c_-\leq H$ follows from these facts and the inequality $f(t)<\min\wt H_t$ in \eqref{eq:max wt H t}. The inequality $H\leq c_-+c$ follows from these facts and the inequality $\max\wt H_t-c<f(t)$ in \eqref{eq:max wt H t}. (This inequality implies that $\max\wt H_t-c<c_-$.) Hence $H$ satisfies \eqref{eq:c-}.

For every $t\in[0,1]$ the derivative $d\wt H_t$ has support inside of $U$, and the derivative $dF_t$ has support outside of $U$. It follows that for every $t\in[0,1]$, we have
\[\phi_H^t=\phi_F^t\circ\phi_{\wt H}^t.\]
Since $\int_0^1\big(-f(t)+c_-\big)dt=0$, we have $\phi_F^1=\id$. It follows that $\phi_H^1=\phi_{\wt H}^1=\phi$, where we used \eqref{eq:phi wt H 1 phi}. Hence $H$ has the desired properties. This proves Lemma \ref{le:H}.
\end{proof}
\begin{proof}[Proof of Theorem \ref{thm:Vert wt phi}]\setcounter{claim}{0} \WLOG we may assume that there exist $a\in(0,\infty)$, $N\in\N$, and a symplectic manifold $(M',\om')$, such that
\begin{align}\label{eq:M}M&=B^2(Na)\x M',\\
\nn\om&=\omst\oplus\om',\\
\label{eq:U}U&=B^2(a)\x M'.
\end{align}
Let $\phi\in\Hamc(U)$ and
\[c>\Vert\phi\Vert\]
be a real number. We choose a function $H'\in C^\infty_c\big([0,1]\x U\big)$, such that
\[\phi_{H'}^1=\phi,\quad\VERT H'\VERT<c.\]
We choose an open subset $V\sub M$ whose closure is compact and contained in $U$, such that $[0,1]\x V$ contains the support of $H'$. By Lemma \ref{le:H} with $M,U$ replaced by $U,V$, there exists a real number $c_-$, a compact subset $K$ of $U$, and a smooth function $H:[0,1]\x U\to\R$ that has support contained in $[0,1]\x K$, such that
\[\phi_H^1=\phi\]
and the inequalities \eqref{eq:c-} holds. We define $\wt H:[0,1]\x M\to\R$ to be equal to $H$ on $[0,1]\x U$ and 0 outside of this set.

An elementary argument using (\ref{eq:M},\ref{eq:U}), shows that there exists $\psi\in\Hamc(M)$, such that
\begin{eqnarray}\label{eq:Vert psi}&\Vert\psi\Vert<a,&\\
\label{eq:K psi K}&K,\psi(K),\ldots,\psi^{N-1}(K)\textrm{ are (pairwise) disjoint},&
\end{eqnarray}
where $\psi^i:=\psi\circ\cdots\circ\psi$ ($i$ factors). We abbreviate 
\[\phi_i:=\phi_{\wt H}^{\frac iN},\quad\phi_{i,j}:=\psi^j\phi_i\psi^{-j},\quad\forall i,j\in\{0,\ldots,N-1\}.\]
We define
\begin{equation}\label{eq:chi}\chi:=\phi_{N-1,0}\phi_{N-2,1}\cdots\phi_{1,N-2},\end{equation}
where for simplicity we leave out the composition signs. We define $F:[0,1]\x M\to\R$ by
\begin{equation}\label{eq:F t x}F(t,x):=\left\{\begin{array}{ll}
\displaystyle\frac{H_{\frac{t+N-i-1}N}\circ\psi^{-i}(x)}N,&\textrm{on }\psi^i(K),\,\forall i\in\{0,\ldots,N-1\},\\
0,&\textrm{otherwise.}
\end{array}\right.\end{equation}
We denote by $\wt\phi:M\to M$ the map given by $\phi$ on $U$ and the identity outside $U$.
\begin{claim}\label{claim:phi F 1 =}We have
\begin{equation}\label{eq:phi F 1 =}\wt\phi=\phi_F^1\chi\psi\chi^{-1}\psi^{-1}.\end{equation}
\end{claim}
\begin{proof}[Proof of Claim \ref{claim:phi F 1 =}] We have
\begin{equation}\label{eq:psi chi psi-1}\psi\chi^{-1}\psi^{-1}=\phi_{1,N-1}^{-1}\cdots\phi_{N-1,1}^{-1}.
\end{equation}
Since $\phi_i$ equals the identity outside $K$, it follows from \eqref{eq:K psi K} that $\phi_{i,j}$ and $\phi_{i',j'}$ commute, if $j\neq j'$. Combining this with (\ref{eq:psi chi psi-1},\ref{eq:chi}), it follows that
\[\chi\psi\chi^{-1}\psi^{-1}=\left\{\begin{array}{ll}
\phi_{N-1,0}&\textrm{ on }K,\\
\phi_{N-i-1,i}\phi_{N-i,i}^{-1}&\textrm{ on }\psi^i(K),\,\forall i\in\{1,\ldots,N-1\},\\
\id&\textrm{ otherwise.}
\end{array}\right.\]
Using \eqref{eq:F t x}, equality \eqref{eq:phi F 1 =} follows. This proves Claim \ref{claim:phi F 1 =}.
\end{proof}
Using Claim \ref{claim:phi F 1 =}, we have
\begin{equation}\label{eq:Vert wt phi Vert}\Vert\wt\phi\Vert\leq\Vert\phi_F^1\Vert+\Vert\chi\psi\chi^{-1}\Vert+\Vert\psi^{-1}\Vert.\end{equation}
By the inequalities \eqref{eq:c-} we have
\[\max F-\min F\leq\frac cN\]
and therefore
\[\Vert\phi_F^1\Vert\leq\frac cN.\]
Combining this with \eqref{eq:Vert wt phi Vert}, the equalities
\[\Vert\chi\psi\chi^{-1}\Vert=\Vert\psi\Vert,\quad\Vert\psi^{-1}\Vert=\Vert\psi\Vert,\]
and \eqref{eq:Vert psi}, it follows that
\[\Vert\wt\phi\Vert<\frac cN+2a.\]
Since this holds for all $c>\Vert\phi\Vert$, the desired inequality \eqref{eq:Vert wt phi} follows. This proves Theorem \ref{thm:Vert wt phi}.
\end{proof}
\begin{Rmk}\textnormal{The idea of writing $\wt\phi$ as in \eqref{eq:phi F 1 =} comes from \cite[proof of the theorem on p.~64]{BK}, in which a given Hamiltonian diffeomorphism of $\R^{2n}$ is written as a product of autonomous pieces. The identity \eqref{eq:phi F 1 =} corresponds to the algebraic identity provided by the proof of \cite[Lemma 2.4]{BIP}.
}%end textnormal
\end{Rmk}
\section{Proofs of Proposition \ref{prop:Lip infty} and Corollary \ref{cor:a N M' om'} (lower bound on the asymptotic Hofer-Lipschitz constant)}\label{proof:prop:Lip infty,cor:a N M' om'}
In this section we prove Proposition \ref{prop:Lip infty} and Corollary \ref{cor:a N M' om'}. To treat the case \reff{prop:Lip infty:ex} in Proposition \ref{prop:Lip infty}, we need the following. Let $(M,\om)$ be a symplectic manifold. We denote $2n:=\dim M$ and define the \emph{Calabi invariant for $(M,\om)$ (on functions)} to be the map
\begin{eqnarray}\nn&\CAL:=\CAL_{(M,\om)}:C^\infty_c\big([0,1]\x M\big)\to\R,&\\
\label{eq:CAL H}&\CAL(H):=\displaystyle\int_0^1\left(\int_MH_t\om^n\right)dt.&
\end{eqnarray}
Assume now that $\om$ is exact.\footnote{Together with our standing assumption that $M$ has no boundary, this implies that each connected component of $M$ is noncompact.}
\begin{lemma}[Calabi invariant]\label{le:Cal} Let $H,H'\in C^\infty_c\big([0,1]\x M\big)$ be functions that generate the same Hamiltonian time-1 flow\footnote{By the Hamiltonian time-1 flow of $H$ we mean $\phi_H^1$.}. Then we have $\CAL(H)=\CAL(H')$.
\end{lemma}
\begin{proof} This follows from the definition of the Calabi homomorphism on $\Hamc(M)$ as in \cite[(10.3.2), p.~407]{MSIntro}, and from \cite[Lemma 10.3.4, p.~408]{MSIntro}, which links this definition with the above definition of $\CAL$.
\end{proof}
We define the \emph{Calabi homomorphism for $(M,\om)$} to be the map
\begin{equation}\label{eq:Cal}\Cal:=\Cal_{(M,\om)}:\Hamc(M)\to\R,\quad\Cal(\phi):=\CAL(H),\end{equation}
where $H\in C^\infty_c\big([0,1]\x M\big)$ is an arbitrary function, whose Hamiltonian time-1 flow equals $\phi$. By Lemma \ref{le:Cal} this map is well-defined, i.e., it does not depend on the choice of $H$. In the proof of Proposition \ref{prop:Lip infty} in the case \reff{prop:Lip infty:ex} we will use the following remark.
\begin{rmk}\label{rmk:sup inf int} Let $M$ be a (smooth) manifold, $\Om$ a volume form on $M$, and $F:M\to\R$ a continuous function, such that $0\in F(M)$. Then the following inequality holds:
\[\left(\sup_MF-\inf_MF\right)\int_M\Om\geq\int_MF\,\Om.\]
\end{rmk}
\begin{proof}[Proof of Proposition \ref{prop:Lip infty} in the case \reff{prop:Lip infty:ex}] %Since $\om$ is exact, the Calabi homomorphism descends to a homomorphism $\Cal:\Hamc(M)\to\R$.
Let $\phi\in\Hamc(M)$. Let $c\in\big(\Vert\phi\Vert,\infty\big)$. We choose $H\in C^\infty_c\big([0,1]\x M\big)$, such that $\phi_H^1=\phi$ and $c\geq\VERT H\VERT$. For every measurable subset $X\sub M$ we write $|X|:=\int_X\om^n$. We have
\begin{align*}c\geq&\VERT H\VERT\\
=&\int_0^1\big(\max_MH_t-\min_MH_t\big)dt\\
\geq&\frac1{|M|}\int_0^1\left(\int_MH_t\om^n\right)dt\\
&\textrm{(using Remark \ref{rmk:sup inf int} and our hypothesis that $|M|$ is finite)}\\
=&\frac{\Cal(\phi)}{|M|}.
\end{align*}
Since $c>\Vert\phi\Vert$ is arbitrary, it follows that
\begin{equation}\label{eq:Vert phi Vert Cal}\Vert\phi\Vert\geq\frac{\Cal(\phi)}{|M|}.
\end{equation}
Let now $C\in[1,\infty)$. We choose a function $H\in C^\infty\big(U,[0,C]\big)$ with compact support, such that
\begin{equation}\label{eq:int U H}\int_UH\om^n\geq(C-1)|U|.\end{equation}
We denote $\phi:=\phi_H^1$ and by $\wt\phi:M\to M$ the map given by $\phi$ on $U$ and the identity outside $U$. We have
\begin{align}\nn\Vert\wt\phi\Vert&\geq\frac{\Cal(\wt\phi)}{|M|}\qquad\textrm{(by \eqref{eq:Vert phi Vert Cal})}\\
\nn&=\frac{\displaystyle\int_UH\om^n}{|M|}\qquad\textrm{(by (\ref{eq:Cal},\ref{eq:CAL H}))}\\
\label{eq:Vert wt phi Vert Cal}&\geq(C-1)\frac{|U|}{|M|}\qquad\textrm{(using \eqref{eq:int U H})}
\end{align}
Since $0\leq H\leq C$, we have
\[\Vert\phi\Vert\leq\VERT H\VERT\leq C.\]
Combining this with \eqref{eq:Vert wt phi Vert Cal}, it follows that
\[\frac{\Vert\wt\phi\Vert}{\Vert\phi\Vert}\geq\frac{C-1}{C}\frac{|U|}{|M|}.\]
Using that $C$ is arbitrarily big, the inequality $\Vert\phi\Vert\geq\Vert\wt\phi\Vert$, and again \eqref{eq:Vert wt phi Vert Cal}, it follows that
\[\Lip^\infty(M,U)\geq\frac{|U|}{|M|}.\]
This proves Proposition \ref{prop:Lip infty} in the case \reff{prop:Lip infty:ex}.
\end{proof}

To prove Proposition \ref{prop:Lip infty} in the case \reff{prop:Lip infty:asph}, we will now adapt the proof of \cite[Theorem 1.1]{Os}, which is based on a result of M.~Schwarz. 

Let $(M,\om)$ be an aspherical symplectic manifold (i.e., \eqref{eq:int S 2} holds) and $H\in C^\infty\big([0,1]\x M\big)$. We define the \emph{action spectrum of $H$} as follows. We denote by $\D\sub\C$ the closed unit disk, and define the set of \emph{contractible $H$-periodic points} to be
\[\PP^\circ(H):=\big\{x_0\in M\,\big|\,\exists u\in C^\infty(\D,M):\,\phi_H^t(x_0)=u(e^{2\pi it}),\,\forall t\in[0,1]\big\}.\]
We define the \emph{$H$-twisted symplectic action of $x_0\in\PP^\circ(H)$} to be
\begin{equation}\label{eq:A H x0}\A_H(x_0):=-\int_\D u^*\om-\int_0^1H\big(t,\phi_H^t(x_0)\big)dt,\end{equation}
where $u\in C^\infty(\D,M)$ is any map satisfying $\phi_H^t(x_0)=u(e^{2\pi it})$, for every $t\in[0,1]$. It follows from asphericity of $(M,\om)$ that this number does not depend on the choice of $u$ and hence is well-defined. We define the \emph{action spectrum of $H$} to be
\[\Si_H:=\A_H(\PP^\circ(H))\sub\R.\]
We will use the following result to make sense of Proposition \ref{prop:Vert phi H 1 Si H} below.
\begin{prop}[action spectrum]\label{prop:Si H} Assume that $M$ is closed. Then $\Si_H$ is compact. 
\end{prop}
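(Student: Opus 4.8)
The plan is to show that $\Si_H$ is both bounded and closed in $\R$. The key structural fact is that, because $M$ is closed, the set of contractible $1$-periodic orbits of $H$ (equivalently, the preimages under the evaluation map of the fixed point set of $\phi_H^1$ that lie in the contractible component) is compact: the fixed point set $\Fix(\phi_H^1)$ is a closed subset of the compact manifold $M$, hence compact, and $\PP^\circ(H)$ is an open-and-closed subset of it (the distinction between components of $\pi_1$-classes of the loops $t\mapsto\phi_H^t(x_0)$ is locally constant in $x_0$), hence also compact. So the first step is to make this precise.

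Next I would argue that the action functional $\A_H:\PP^\circ(H)\to\R$ is continuous. For each $x_0\in\PP^\circ(H)$ the loop $\gamma_{x_0}(t):=\phi_H^t(x_0)$ depends smoothly on $x_0$, and nearby loops are freely homotopic through short homotopies; choosing capping disks $u_{x_0}$ that vary continuously with $x_0$ on a neighborhood (possible since the relevant $\pi_1$-class is constant there, and one can extend a chosen capping across the homotopy), the symplectic area term $\int_\D u_{x_0}^*\om$ varies continuously, and asphericity guarantees this is independent of the choice of capping so the local formulas patch to a well-defined continuous function on all of $\PP^\circ(H)$. The Hamiltonian term $\int_0^1 H(t,\phi_H^t(x_0))\,dt$ is manifestly continuous in $x_0$. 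Hence $\A_H$ is continuous on the compact set $\PP^\circ(H)$, so its image $\Si_H=\A_H(\PP^\circ(H))$ is a compact subset of $\R$.

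The main obstacle is the continuity-of-cappings argument: one must be careful that as $x_0$ moves the loop $\gamma_{x_0}$ stays in a fixed free-homotopy (indeed based-homotopy, after trivializing) class, so that a continuously-varying family of capping disks genuinely exists, and that asphericity is invoked correctly to kill the ambiguity coming from sphere bubbles / different homotopy classes of cappings. Once this local well-definedness and continuity is in hand, compactness of $\Si_H$ is immediate from compactness of $\PP^\circ(H)$ and continuity of $\A_H$. I would also remark that boundedness alone is easy without the full continuity argument — $|\A_H(x_0)|$ is bounded by $\sup|H|$ plus a uniform bound on the areas of cappings of a compact family of loops — but closedness really does use that $\PP^\circ(H)$ is compact and $\A_H$ continuous, so I would run the argument through continuity to get both at once.
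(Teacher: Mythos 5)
The paper does not prove this proposition itself; it simply cites \cite[Proposition 3.7]{Schwa}. Your proposal supplies a direct argument, and it is correct, and presumably close in spirit to the proof Schwarz gives. The two ingredients are the right ones: (i) $\PP^\circ(H)$ is compact, since $\Fix(\phi_H^1)$ is closed in the compact manifold $M$, and $\PP^\circ(H)$ is open and closed in $\Fix(\phi_H^1)$ because the free homotopy class of $t\mapsto\phi_H^t(x_0)$ is a locally constant function of $x_0$; (ii) $\A_H$ is continuous on $\PP^\circ(H)$, since for $x$ near a given $y_0\in\PP^\circ(H)$ one obtains a capping of $\ga_x$ by gluing a fixed capping $u_0$ of $\ga_{y_0}$ with the short cylinder $(s,t)\mapsto\phi_H^t(\si(s))$ swept out by the flow along a short path $\si$ from $y_0$ to $x$, whose $\om$-area tends to zero; asphericity makes $\A_H$ independent of the capping, so these local computations patch into a globally well-defined continuous function, and the Hamiltonian term is obviously continuous. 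Compactness of $\Si_H=\A_H(\PP^\circ(H))$ follows. One small caution on your closing remark: the claim that ``boundedness alone is easy'' somewhat understates matters, since a ``uniform bound on the areas of cappings of a compact family of loops'' is not free — without asphericity there is no such bound at all, and with asphericity the argument for it is essentially the same local gluing used for continuity. But since you run the full continuity argument anyway, this does not affect the proof.
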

\begin{proof}[Proof of Proposition \ref{prop:Si H}] This is part of the statement of \cite[Proposition 3.7]{Schwa}.
\end{proof}
The proof of Proposition \ref{prop:Lip infty} in the case \reff{prop:Lip infty:asph} is based on the following result, which is a consequence of an argument of M.~Schwarz.
\begin{prop}[lower bound on Hofer-norm]\label{prop:Vert phi H 1 Si H} Assume that $(M,\om)$ is closed, connected, and aspherical. Then for every $H\in C^\infty\big([0,1]\x M\big)$ we have
\begin{equation}\label{eq:Vert phi H 1}\Vert\phi_H^1\Vert\geq\min\Si_H+\frac{\int_0^1\left(\int_MH_t\om^n\right)dt}{\int_M\om^n}.
\end{equation}
\end{prop}
\begin{Rmk}\textnormal{It follows from Proposition \ref{prop:Si H} that the minimum $\min\Si_F$ exists. Hence the right hand side of \eqref{eq:Vert phi H 1} makes sense.
}%end textnormal
\end{Rmk}
We call $F\in C^\infty\big([0,1]\x M\big)$ \emph{mean normalized (\wrt $\om$)} iff
\begin{equation}\label{eq:int M F t}\int_MF_t\om^n=0,\quad\forall t\in[0,1].
\end{equation}
We denote
\[\HH:=\left\{F\in C^\infty\big([0,1]\x M\big)\,\bigg|\,%\eqref{eq:int M F t}\textrm{ holds}
F\textrm{ is mean normalized}\right\}.\]
\begin{proof}[Proof of Proposition \ref{prop:Vert phi H 1 Si H}]\setcounter{claim}{0} It follows from \cite[Theorem 12.4.4]{MSJ} and \cite[Proposition 3.1(i)]{McDMono} that there exists a map
\[c:\HH\to\R,\]
such that for every $F\in\HH$, we have
\begin{align}\label{eq:c H Si H}c(F)&\in\Si_F,\\
\label{eq:c H int}\Vert\phi_F^1\Vert&\geq c(F).\end{align} %-\int_0^1\min_MH_tdt
Namely, in the notation of \cite[Theorem 12.4.4]{MSJ} the spectral invariant $c(F):=\rho(\wt\phi_F;1)$ satisfies \eqref{eq:c H Si H} by \cite[Theorem 12.4.4, (Spectrality)]{MSJ} and the inequality
\begin{equation}\label{eq:Vert H Vert}\VERT F\VERT\geq c(F),\quad\forall F\in\HH,
\end{equation}
by \cite[Theorem 12.4.4, (Continuity), (12.4.5)]{MSJ}. Here we used the definition \eqref{eq:VERT H VERT} and the assumptions that $M$ is closed and $\om$ is aspherical, and therefore strongly semi-positive and rational on $\pi_2(M)$ (conditions \cite[(8.5.1), (12.4.1)]{MSJ}). Using again our hypothesis that $\om$ is aspherical, it follows from \cite[Proposition 3.1(i)]{McDMono} that for all $F,F'\in\HH$ satisfying $\phi_F^1=\phi_{F'}^1$ we have $c(F)=c(F')$. (This means that the spectral invariant $c$ descends from the universal cover of $\Ham(M)$ to $\Ham(M)$.) Combining this with \eqref{eq:Vert H Vert}, inequality \eqref{eq:c H int} follows.\\

Let now $H\in C^\infty\big([0,1]\x M\big)$. We define 
\[f:[0,1]\to\R,\quad f(t):=\frac{\int_MH_t\om^n}{\int_M\om^n},\]
and $F:[0,1]\x M\to\R$ by $F_t(x):=F(t,x):=H_t(x)-f(t)$. By straight-forward arguments this function is mean normalized, generates $\phi_H^1$, and satisfies
\[\Si_F=\Si_H+\int_0^1f(t)dt.\]
Inequality \eqref{eq:Vert phi H 1} follows from this and (\ref{eq:c H Si H},\ref{eq:c H int}). This proves Proposition \ref{prop:Vert phi H 1 Si H}.
\end{proof}
In the proof of Proposition \ref{prop:Lip infty} in the case \reff{prop:Lip infty:asph} we will also use the following result, which is due to Y.~Ostrover. Let $(M,\om)$ be a symplectic manifold and $H,H'\in C^\infty_c([0,1]\x M)$. We denote by $H\#H':[0,1]\x M\to\R$ the time-concatenation of $H$ and $H'$, given by 
\[(H\#H')_t:=\left\{\begin{array}{ll}
2H^{2t},&\textrm{if }t\in[0,\frac12],\\
2{H'}^{2t-1},&\textrm{if }t\in(\frac12,1].
\end{array}\right.\]
\begin{prop}[action for concatenated Hamiltonian]\label{prop:H H'} Assume that $H_t,H'_t=0$ for $t$ in some neighbourhood of $\{0,1\}$, and defining $X:=\bigcup_{t\in[0,1]}\supp H_t$ \footnote{Here $\supp$ denotes the support of a function.}, we have
\begin{equation}\label{eq:phi H'}\phi_{H'}^1(X)\cap X=\emptyset.\end{equation}
Then the following holds:
\begin{enui}\item\label{prop:H H':PP}
\begin{equation}\label{eq:PP circ}\PP^\circ(H\#H')=\PP^\circ(H').\end{equation}
\item\label{prop:H H':A} If $(M,\om)$ is aspherical then we have
\begin{equation}\label{eq:A H H'}\A_{H\#H'}(x_0)=\A_{H'}(x_0),\quad\forall x_0\in\PP^\circ(H').
\end{equation}
\end{enui}
\end{prop}
This result follows from the proof of \cite[Proposition 2.2]{Os}. For the convenience of the reader we prove it in the appendix on page \pageref{proof:prop:H H'}.
\begin{proof}[Proof of Proposition \ref{prop:Lip infty} in the case \reff{prop:Lip infty:asph}]\setcounter{claim}{0} Without loss of generality, we may assume that $M$ is connected and $U\neq\emptyset$. For every measurable subset $X\sub M$ we write $|X|:=\int_X\om^n$. Let $C>0$ and 
\begin{equation}\label{eq:c c 0}c<c_0:=\frac{|U|}{|M|}\end{equation}
be a positive constant. We denote by $\wt\phi:M\to M$ the map given by $\phi$ on $U$ and the identity outside $U$.
\begin{claim}\label{claim:phi} There exists $\phi\in\Hamc(U)$ such that 
\begin{equation}\label{eq:C c}\Vert\wt\phi\Vert\geq\max\{C,c\Vert\phi\Vert\}.
\end{equation}
\end{claim}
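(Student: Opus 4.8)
The plan is to produce, for any given $C>0$ and any $c<c_0=|U|/|M|$, a Hamiltonian diffeomorphism $\phi\in\Ham(U)$ whose extension $\wt\phi$ is forced to have large Hofer norm on $M$ by the action-selector estimate of Theorem~\ref{thm:Vert phi H 1 Si H}. First I would pick a non-empty open ball (or displaceable chart) $V\sub M$ with $\phi_{G}^1(V)\cap V=\emptyset$ for some $G\in C^\infty_c([0,1]\x M,\R)$, using that $U$ — and hence a small subset of it — is displaceable in a Hamiltonian way; actually, since $U$ itself is displaceable, one takes $X:=\Supp H\sub U$ for the $H$ constructed next and displaces it directly. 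Then I would choose $H\in C^\infty_c([0,1]\x M,\R)$ with support in $[0,1]\x U$, vanishing near $t\in\{0,1\}$, such that on a large region $H$ equals a big positive constant $k$ while $\int_M H^t\om^n$ stays controlled; concretely, fix a bump function supported in $U$ equal to $k$ on most of $U$ so that $\int_0^1(\int_M H^t\om^n)dt\,/\,|M|$ is close to $k|U|/|M|=kc_0$, and $\Vert H\Vert$ is close to $k$. Set $\phi:=\phi_H^1$.

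The key computation is then to bound $\Vert\wt\phi\Vert^M=\Vert\wt{\phi_H^1}\Vert^M$ from below. I would apply Theorem~\ref{thm:Vert phi H 1 Si H} to the extended Hamiltonian $\wt H$ (which generates $\wt\phi$ on $M$), giving
\[
\Vert\wt\phi\Vert^M\geq\min\Si_{\wt H}+\frac{\int_0^1\big(\int_M\wt H^t\om^n\big)dt}{\int_M\om^n}.
\]
The second term is $\approx k c_0$ by construction. For the first term, the idea is that $\min\Si_{\wt H}$ can be made non-negative (or at least bounded below independently of $k$) by pre-composing with the displacing flow: replace $\wt H$ by $G\#\wt H$, which generates the \emph{same} time-$1$ map up to the displacement having no effect on $\wt\phi$ outside $X$ — more precisely, one uses that $\phi_G^1$ displaces $X=\Supp\wt H$, so Proposition~\ref{prop:H H'} gives $\PP^\circ(G\#\wt H)=\PP^\circ(\wt H)$ wait — rather one wants the concatenation in the \emph{other} order or applies the proposition to relate spectra. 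The cleanest route: use Proposition~\ref{prop:H H'} with the roles arranged so that $\Si_{G\#\wt H}=\Si_{\wt H}$ after accounting for the normalization shift, hence $\min\Si_{\wt H}$ is controlled by the (bounded, $k$-independent) spectrum of the displaced Hamiltonian, while $\Vert G\#\wt H\Vert$ exceeds $\Vert\wt H\Vert$ only by the fixed amount $\Vert G\Vert$. Then $\Vert\wt\phi\Vert^M\geq$ (something like) $kc_0-O(1)$, and since $\Vert\phi\Vert^U\leq\Vert H\Vert\approx k$ while $c<c_0$, for $k$ large enough both $\Vert\wt\phi\Vert^M\geq C$ and $\Vert\wt\phi\Vert^M\geq c\Vert\phi\Vert^U$ hold, which is exactly \eqref{eq:C c}.

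I expect the main obstacle to be making the term $\min\Si_{\wt H}$ harmless: a priori $\min\Si_{\wt H}$ could be very negative (on the order of $-k$) because $\wt H$ takes the large value $k$ somewhere, which would cancel the gain from the second term. This is precisely where displaceability of $U$ must be used — the displacement forces the relevant capped periodic orbits to be the constant ones lying \emph{outside} $X$, where the action reduces (via Proposition~\ref{prop:H H'} and asphericity) to that of the short Hamiltonian piece, whose action spectrum is $k$-independent. Getting this bookkeeping right — in particular handling the time-normalization needed so that $\wt H$ (or its normalization) vanishes near the endpoints and the concatenation $G\#\wt H$ is legitimate, and tracking the exact constants absorbed into the $O(1)$ — is the delicate part; everything else (choosing the bump function, estimating $\Vert H\Vert$ and $\int_M H^t\om^n$, taking $k\to\infty$) is routine. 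After establishing Claim~\ref{claim:phi}, the theorem follows by letting $C\to\infty$ in the definition \eqref{eq:Lip infty} of $\Lip^\infty(M,U)$ and then $c\to c_0$.
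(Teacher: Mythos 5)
You have the right skeleton — choose a bump Hamiltonian $H$ supported in $U$, apply the action-selector bound (Theorem \ref{thm:Vert phi H 1 Si H}), and use displaceability of $U$ together with Proposition \ref{prop:H H'} to keep the spectrum term from ruining the estimate as $k\to\infty$ — and this is exactly the strategy of the paper's proof of Claim \ref{claim:phi}. However, the bookkeeping you flag as delicate is precisely where your sketch has a genuine gap, and it is not merely cosmetic.

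First, the concatenation $\wt H\#G$ does \emph{not} generate the same time-$1$ map as $\wt H$: by the definition of $\#$ one has $\phi_{\wt H\#G}^1=\phi_G^1\circ\phi_{\wt H}^1=\phi_G^1\circ\wt\phi$, which is $\wt\phi$ post-composed with the full displacing map. The paper bridges this via the triangle inequality,
\[
\Vert\wt\phi\Vert^M \;\geq\; \Vert\phi_{\wt H\#G}^1\Vert^M - \Vert\phi_G^1\Vert^M \;\geq\; \Vert\phi_{\wt H\#G}^1\Vert^M - \Vert G\Vert,
\]
and it is on $\phi_{\wt H\#G}^1$, not on $\wt\phi$, that Theorem \ref{thm:Vert phi H 1 Si H} is applied. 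Without this step your chain of inequalities does not close. Second, you need the concatenation in the order ``$\wt H$ first, displacer second'': Proposition \ref{prop:H H'} then gives $\PP^\circ(\wt H\#G)=\PP^\circ(G)$ and $\Si_{\wt H\#G}=\Si_G$ (the $k$-independent spectrum), because $\phi_G^1$ displaces $\Supp\wt H$. The equality $\Si_{G\#\wt H}=\Si_{\wt H}$ you wrote is both the wrong order and the wrong conclusion: the hypothesis of Proposition \ref{prop:H H'} would then require $\phi_{\wt H}^1$ to displace $\Supp G$, which is false, and the resulting spectrum would still be the $k$-dependent one you are trying to avoid. Third, you should normalize $G$ (replace $G^t$ by $G^t-\int_M G^t\om^n/|M|$) so that it contributes nothing to the volume-averaged term in the action-selector bound; otherwise the ``$\approx kc_0$'' computation for the second term acquires an uncontrolled additive error from $G$. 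Once these three points are fixed — apply the theorem to $\wt H\#G$, use the triangle inequality to transfer the bound to $\wt\phi$ at the cost of $\Vert G\Vert$, and normalize $G$ — your $k\to\infty$ argument gives exactly the paper's estimate, with $t_0$ chosen large enough to absorb the $k$-independent error $\Vert G\Vert-\min\Si_G$ and to guarantee $\Vert\wt\phi\Vert^M\geq C$ simultaneously.
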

\begin{proof}[Proof of Claim \ref{claim:phi}] By hypothesis there exists a function $F\in C^\infty\big([0,1]\x M\big)$ such that
%$F$ automatically has compact support, since $M$ is compact
\begin{equation}\label{eq:phi F 1}\phi_F^1(U)\cap U=\emptyset.
\end{equation}
Reparametrizing $F$, we may assume that $F_t=0$ for $t$ in some neighbourhood of $\{0,1\}$. Furthermore, replacing $F_t$ by $F_t-\int_MF_t\om^n/|M|$, we may assume that $F$ is mean normalized, i.e., it satisfies \eqref{eq:int M F t}. We choose a compact subset $K\sub U$ such that 
\begin{equation}\label{eq:frac int K om n}\frac{|K|}{|M|}>c.
\end{equation}
Furthermore, we choose a smooth function $H_0:U\to[0,1]$ with compact support, such that
\begin{equation}\label{eq:H 0 K}H_0|_K=1.\end{equation}
We define
\begin{equation}\label{eq:t 0}t_0:=\max\left\{\frac{\VERT F\VERT-\min\Si_F}{\displaystyle\frac{|K|}{|M|}-c},\frac Cc\right\}.
\end{equation}
It follows from \eqref{eq:frac int K om n} that $t_0<\infty$. We define 
\[\phi:=\phi_{t_0H_0}^1.\]
\begin{claim}\label{claim:eq:C c} This map satisfies inequality \eqref{eq:C c}.
\end{claim}
\begin{pf}[Proof of Claim \ref{claim:eq:C c}] We choose a function $f\in C^\infty\big([0,1],[0,1]\big)$ such that $f=i$ in a neighbourhood of $i$, for $i=0,1$. We define 
\begin{equation}\label{eq:H f'}H:[0,1]\x M\to\R,\quad H_t(x):=\left\{\begin{array}{ll}f'(t)t_0H_0(x),&\textrm{if }x\in U,\\
0,&\textrm{otherwise.}\end{array}\right.
\end{equation}
We have
\begin{equation}\label{eq:phi H 1 wt phi}\phi_H^1=\wt\phi.
\end{equation}
Using \eqref{eq:phi F 1}, the fact that the support of $H_0$ is contained in $U$, and asphericity of $(M,\om)$, the hypotheses of Proposition \ref{prop:H H'} with $H':=F$ are satisfied. Hence applying this proposition, it follows that 
\begin{equation}\label{eq:Si H F}\Si_{H\#F}=\Si_F.
\end{equation}
Applying Proposition \ref{prop:Vert phi H 1 Si H}, we have
\begin{equation}\label{eq:Vert phi H F}\Vert\phi_{H\#F}^1\Vert\geq\min\Si_{H\#F}+\frac{\displaystyle\int_0^1\left(\int_M(H\#F)_t\om^n\right)dt}{|M|}.\end{equation}
Using the triangle inequality and the fact $\Vert\phi_F^1\Vert\leq\VERT F\VERT$, we have
\begin{equation}\label{eq:Vert phi H 1 F}\Vert\phi_H^1\Vert\geq\Vert\phi_{H\#F}^1\Vert-\VERT F\VERT.
\end{equation}
We have 
\begin{align}\nn\int_0^1\left(\int_M(H\#F)_t\om^n\right)dt&=\int_0^1\left(\int_MH_t\om^n\right)dt+\int_0^1\left(\int_MF_t\om^n\right)dt\\
\label{eq:int 0 1 int M H F}&\geq t_0|K|+0\qquad\textrm{(using (\ref{eq:H f'},\ref{eq:H 0 K},\ref{eq:int M F t})),}\end{align}
\begin{align}\nn\Vert\wt\phi\Vert&=\Vert\phi_H^1\Vert\qquad\textrm{(using \eqref{eq:phi H 1 wt phi})}\\
\nn&\geq\min\Si_{H\#F}-\VERT F\VERT+t_0\frac{|K|}{|M|}\qquad\textrm{(using (\ref{eq:Vert phi H 1 F},\ref{eq:Vert phi H F},\ref{eq:int 0 1 int M H F}))}\\
\label{eq:Vert phi H c t 0}&\geq ct_0\qquad\textrm{(using (\ref{eq:Si H F},\ref{eq:t 0})).}
\end{align}
Using again \eqref{eq:t 0}, it follows that 
\begin{equation}\label{eq:Vert phi C}\Vert\wt\phi\Vert\geq C.
\end{equation}
Condition \eqref{eq:H 0 K}, the fact $K\neq\emptyset$, and the inequality $H_0\leq1$ imply that $\max_UH_0=1$. Since $H_0$ has compact support and satisfies $H_0\geq0$, we have $\min_UH_0=0$. Using \eqref{eq:H f'}, the fact $f(i)=i$, for $i=0,1$, and the Fundamental Theorem of Calculus, it follows that
\[\VERT H|[0,1]\x U\VERT=t_0.\]
Since $\phi=\phi_{H|[0,1]\x U}^1$, it follows that
\[\Vert\phi\Vert\leq t_0.\]
Combining this with \eqref{eq:Vert phi H c t 0} and \eqref{eq:Vert phi C}, inequality \eqref{eq:C c} follows. This proves Claim \ref{claim:eq:C c} and hence Claim \ref{claim:phi}. \end{pf}\end{proof}
We choose a map $\phi$ as in Claim \ref{claim:phi}. The fact $\Vert\phi\Vert\geq\Vert\wt\phi\Vert$ and inequality \eqref{eq:C c} imply that $\Vert\phi\Vert\geq C$. Inequality \eqref{eq:C c} also implies that $\Vert\wt\phi\Vert/\Vert\phi\Vert\geq c$. It follows that 
\[\Lip^\infty(M,U)\geq c.\]
Since $c<c_0$ (as defined in \eqref{eq:c c 0}) is arbitrary, the estimate \eqref{eq:Lip infty M U} follows. This completes the proof of Proposition \ref{prop:Lip infty} in the case \reff{prop:Lip infty:asph}.
\end{proof}
\begin{proof}[Proof of Corollary \ref{cor:a N M' om'}]\setcounter{claim}{0} We choose an area form $\si$ on the two-torus $\T^2$ such that $\int_{\T^2}\si=c$, and a symplectic embedding $\psi:B^2(c)\to\T^2$. Let $\eps>0$. We define 
\[(M,U,\om):=\big(\T^2\x M',\psi(B^2(a-\eps))\x M',\si\oplus\om'\big).\]
Then the hypotheses of Proposition \ref{prop:Lip infty} are satisfied. (That the subset $U\sub M$ is displaceable in a Hamiltonian way, follows from our hypothesis $c\geq2a$ and the fact that every open two-dimensional ball of area less than $a$ is displaceable inside every ball of area $2a$.) Therefore, applying this theorem, it follows that 
\[\Lip^\infty(M,U)\geq\frac{\int_U\om^n}{\int_M\om^n}=\frac{(a-\eps)\int_{M'}{\om'}^{n'}}{c\int_{M'}{\om'}^{n'}},\]
where $2n':=\dim M'$. Since $\eps>0$ is arbitrary, the claimed inequality \eqref{eq:Lip infty 1 N} follows. This proves Corollary \ref{cor:a N M' om'}.
\end{proof}
\section{Proof of Proposition \ref{prop:a} (lower bound on the relative Hofer diameter)}\label{proof:prop:a}
In the proof of Proposition \ref{prop:a} we will use the following definition. Let $(M,\om)$ be a symplectic manifold and $N\sub M$ a coisotropic submanifold. We define the \emph{action spectrum} and the \emph{minimal area} of $(M,\om,N)$ as
\begin{eqnarray}\nn&S(M,\om,N):=&\\
\nn&\left\{\displaystyle\int_\D u^*\om\,\bigg|\,u\in C^\infty(\D,M):\,\exists\textrm{ isotropic leaf }F\sub N:\,u(S^1)\sub F\right\},&
\end{eqnarray}
\[A(M,\om,N):=\inf\big(S(M,\om,N)\cap (0,\infty)\big)\in[0,\infty].\]
Furthermore, for $n\in\N$ and $a>0$ we denote by $S^{2n-1}(a)\sub\R^{2n}$ the sphere of radius $\sqrt{a/\pi}$, around 0.

\begin{proof}[Proof of Proposition \ref{prop:a}]\setcounter{claim}{0} Let $\eps>0$. We define 
\[N:=S^1(a-\eps)\x S^1(a-\eps)\x S^{2n-1}(a-\eps)\x X.\]
This is a closed and regular coisotropic submanifold of $U$. We choose a map $\phi_0\in\Hamc(B^2(2a))$ such that 
\begin{equation}\label{eq:phi 0}\phi_0(S^1(a-\eps))\cap S^1(a-\eps)=\emptyset.\end{equation}
(That there exists such a map follows the fact that every open two-dimensional ball of area less than $a$ is displaceable inside every ball of area $2a$.) Since $N$ is compact, by a cutoff argument there exists a map $\phi\in\Hamc(U)$ such that $\phi=\big(\id_{\R^2}\x\phi_0\x\id_{(B^2(a))^n\x X}\big)$ on $N$. (See for example \cite[Lemma 35]{SZSmall}.) It follows from \eqref{eq:phi 0} that 
\begin{equation}\label{eq:phi N N}\phi(N)\cap N=\emptyset.\end{equation} 
We define $V:=\R^2\x B^2(2a)\x(B^2(a))^n\x X$. Since by hypothesis, $(M',\om')$ is aspherical, the same holds for $(X,\si)$. Hence it follows from \cite[Remark 31, Lemma 30, and Proposition 34]{SZSmall} that 
\begin{equation}\label{eq:A V}A\big(V,\om|_V,N\big)=a-\eps.\end{equation}
Using again that $(M',\om')$ is aspherical and \cite[Lemma 33]{SZSmall}, we have 
\[A(M,\om,N)=A\big(V,\om|_V,N\big).\]
We denote by $\wt\phi:M\to M$ the map given by $\phi$ on $U$ and the identity outside $U$. Combining this with \eqref{eq:A V} and using \eqref{eq:phi N N} and geometric boundedness of $(M',\om')$, it follows from \cite[Theorem 1]{ZiLeafwise} that $\Vert\wt\phi\Vert\geq a-\eps$. Since $\eps>0$ is arbitrary, it follows that 
\[\Vert\wt\phi\Vert\geq a.\]
The inequality \eqref{eq:Diam c U} follows from this. This proves Proposition \ref{prop:a}.
\end{proof}
\appendix
\section{Remarks on the relative Hofer diameter, Hofer-Lipschitz constants, and Corollary \ref{cor:Lipschitz}}\label{sec:rmk}
\subsection{Hofer-Lipschitz constants}\label{subsec:Lipschitz} 
Let $(M,\om)$ be a symplectic manifold and $U\sub M$ an open subset. Instead of $\Lip^\infty(M,U)$ (as defined in \eqref{eq:Lip infty}), consider the \emph{Hofer-Lipschitz constant} of $(M,U,\om)$, which we define as
\begin{equation}\label{eq:Lip M U om}\Lip(M,U):=\Lip(M,U,\om):=\sup\left\{\frac{\Vert\wt\phi\Vert^M_c}{\Vert\phi\Vert^U_c}\,\bigg|\,\id\neq\phi\in\Hamc(U)\right\}.\end{equation}
This is the Lipschitz constant of the natural inclusion \eqref{eq:phi wt phi} \wrt the Hofer norms for $U$ and $M$. By \cite[Theorem 1.1]{LMGeo} every $\phi\in\Hamc(U)$ other than $\id$ has positive Hofer norm. Hence this definition makes sense. However, if $M$ is closed and $U\neq\emptyset$ then 
\begin{equation}\label{eq:Lip M U om = 1}\Lip(M,U)=1,\end{equation}
hence this number is uninteresting. To see that \eqref{eq:Lip M U om = 1} holds, note that without loss of generality, we may assume that $M$ is connected. By definition, we have $\Lip(M,U)\leq1$. Furthermore\footnote{We were made aware of the following argument by F.~Schlenk.}, let $H\in C^\infty_c(U)$ be a non-constant function. We define $\wt H:M\to\R$ by $\wt H(x):=H(x)$, if $x\in U$, and $\wt H(x):=0$, otherwise. It follows from \cite[Theorem 1.6(i)]{McD_geometric} that there exists $t_0>0$, such that 
\[\Vert\phi_{\wt H}^{t_0}\Vert^M_c=t_0\left(\max_M\wt H-\min_M\wt H\right)\geq\Vert\phi_H^{t_0}\Vert^U_c.\]
It follows that $\Lip(M,U)\geq1$, and therefore, equality \eqref{eq:Lip M U om = 1} holds. 
\subsection{Corollary \ref{cor:Lipschitz} (upper bound on the asymptotic Hofer-Lipschitz constant)}\label{subsec:general}
In view of the estimate \eqref{eq:Lip infty N}, it is natural to ask the following question.
\begin{q} Does there exist a constant $C>0$, such that for every symplectic manifold $(M,\om)$ of finite volume and every open subset $U\sub M$, the estimate
\begin{equation}\label{eq:Lip infty M U C}\Lip^\infty(M,U)\leq C\frac{\int_U\om^n}{\int_M\om^n}
\end{equation}
holds, where $2n:=\dim M$?
\end{q}
The answer is negative, even if we allow the constant $C$ to depend on the symplectic manifold. This follows from the next result, which in the case \reff{prop:big asympt Lip:surf} is based on a technique by F.~Lalonde and D.~McDuff \cite[proof of Lemma 5.7, p.~64]{LMHofer}, and in case \reff{prop:big asympt Lip:CP} is due to L.~Polterovich (private communication).

\begin{prop}[big asymptotic Lipschitz constant]\label{prop:big asympt Lip} The equality
\begin{equation}\label{eq:Lip infty M U 1}\Lip^\infty(M,U)=1\end{equation}
holds if $(M,\om,U)$ is given by one of the following:
\begin{enua}
\item\label{prop:big asympt Lip:surf} $(M,\om)$ is a two-dimensional symplectic manifold %(real) surface, $\om$ an area form on $M$
and $U\sub M$ an open neighbourhood of some non-contractible embedded circle in $M$.
\item\label{prop:big asympt Lip:CP} $M$ is the complex projective space $\CP^n$ for some $n\in\N$, $\om$ the Fubini-Studi form, and $U\sub M$ an open neighbourhood of the real projective space $\RP^n$ (embedded in $\CP^n$ in the standard way).
\end{enua}
\end{prop}
\begin{Rmks}\textnormal{\begin{itemize}\item Since we may choose $U$ to have arbitrary small volume in these examples, it follows that the bound \eqref{eq:Lip infty M U C} does not hold.
\item The equality \eqref{eq:Lip infty M U 1} means that there are arbitrarily Hofer-large Hamiltonian diffeomorphisms on $U$ whose Hofer norm does almost not shrink when trivially extending the diffeomorphism to $M$. This equality is optimal, since $\Lip^\infty(M,U)$ is always bounded above by 1.
\item The set $U$ in these examples is non-displaceable, since the same holds for the circle and $\RP^n$, respectively. Hence the statement of Proposition \ref{prop:Lip infty} continues to hold for some non-aspherical symplectic manifolds and some small non-displaceable subsets $U$. 
\end{itemize}
}%end textnormal
\end{Rmks}

In the proof of Proposition \ref{prop:big asympt Lip} in the case \reff{prop:big asympt Lip:surf} we will use the following.
\begin{lemma}[fundamental group of surface]\label{le:fund gp surf} If a connected real surface $M$ is not diffeomorphic to the real projective space $\RP^2$ then its fundamental group is torsionfree.
\end{lemma}
\begin{proof}[Proof of Lemma \ref{le:fund gp surf}] Assume first that $M$ is noncompact. Then its fundamental group is free, see e.g.~\cite[4.2.2 Infinite Surfaces, p.~142]{St}. Therefore it is torsionfree. 

If $M$ is diffeomorphic to $S^2$, then the statement of the lemma is true.

Assume now that $M$ is compact (without boundary) and not diffeomorphic to $S^2$ or $\RP^2$. Then its universal cover is diffeomorphic to $\R^2$. Here the nonorientable case can be reduced to the orientable case, by using that the Euler characteristic of the orientation double cover of $M$ equals twice the Euler characteristic of $M$.

It now follows from the proof of \cite[Lemma 4.1, p.~5]{Lu} that the fundamental group of $M$ is torsionfree. This proves Lemma \ref{le:fund gp surf}.
\end{proof}

\begin{proof}[Proof of Proposition \ref{prop:big asympt Lip} in the \textbf{case} \reff{prop:big asympt Lip:surf}]\setcounter{claim}{0} Let $(M,\om,U)$ be as in \reff{prop:big asympt Lip:surf}. In order to prove equality \eqref{eq:Lip infty M U 1}, it suffices to prove the inequality
\begin{equation}\label{eq:Lip infty M U geq}\Lip^\infty(M,U)\geq1.\end{equation}
We choose a noncontractible embedded circle $L$ in $M$ that is contained in $U$. We also choose a universal cover $\pi:\wt M\to M$. We equip $\wt M$ with the symplectic form $\wt\om:=\pi^*\om$. Let $C\in[4,\infty)$.
\begin{claim}\label{claim:area H}
\begin{enui}\item\label{claim:area H:area} The total area of $\wt\om$ is infinite.
\item\label{claim:area H:H} There exists a function $H\in C^\infty\big(M,[0,C]\big)$ with compact support contained in $U$, and compact submanifolds\footnote{with boundary} $\wt K^\pm$ of $\wt M$ that are symplectomorphic to $\BAR B^2(C-3)$, such that the following holds. If $\wt\phi:\wt M\to\wt M$ is a continuous lift of $\phi_H^1$ in the sense that $\pi\circ\wt\phi=\phi_H^1\circ\pi$, then $\wt\phi$ displaces $\wt K^+$ or $\wt K^-$.
\end{enui}
\end{claim}
\begin{proof}[Proof of Claim \ref{claim:area H}] We equip $\R\x\R$ and $(\R/\Z)\x\R$ with the canonical symplectic forms. By Weinstein's Lagrangian neighbourhood theorem there exist $a\in(0,\infty)$, an open neighbourhood $V$ of $L$ that is contained in $U$, and a symplectomorphism $\psi:(\R/\Z)\x(-a,a)\to V$, such that $\psi\big((\R/\Z)\x\{0\}\big)=L$. 

We denote by $\pi':\R\x(-a,a)\to(\R/\Z)\x(-a,a)$ the canonical projection. %We choose point $\wt x_0\in\pi^{-1}(L)$ and define 
We choose a map $\wt\psi\in C^\infty\big(\R\x(-a,a),\wt M\big)$ %to be the unique continuous %lift of $\psi$
satisfying
\begin{equation}\label{eq:pi wt psi}\pi\circ\wt\psi=\psi\circ\pi'.\end{equation}
By our hypothesis that $L$ is noncontractible, the condition $\psi\big((\R/\Z)\x\{0\}\big)=L$, and Lemma \ref{le:fund gp surf}, the map $\wt\psi$ is injective and therefore a symplectic embedding. It follows that the image of $\wt\psi$ has infinite area. Statement \reff{claim:area H:area} follows.\\

To prove \reff{claim:area H:H}, we choose a function $f\in C^\infty\big((-a,a),[0,C]\big)$ with compact support, such that
\begin{equation}\label{eq:f p}f(p)=\frac{C|p|}a\textrm{ on }\left(-a\frac{C-1}C,-\frac aC\right)\cup\left(\frac aC,a\frac{C-1}C\right).\end{equation}
We denote by $\pr:(\R/\Z)\x(-a,a)\to(-a,a)$ the canonical projection. We define the function $H:M\to\R$ by
\begin{equation}\label{eq:H}H:=\left\{\begin{array}{ll}
f\circ\pr\circ\psi^{-1}&\textrm{on }V,\\
0&\textrm{otherwise.}
\end{array}\right.
\end{equation}
We denote
\begin{equation}\label{eq:U+}U^+:=\left(0,\frac Ca\right)\x\left(\frac aC,a\frac{C-1}C\right),\quad U^-:=-U^+=\big\{-(q,p)\,\big|\,(q,p)\in U^+\big\},\end{equation}
and choose a compact submanifold
\begin{equation}\label{eq:K pm}K^\pm\sub U^\pm\end{equation} %of area $C-3$
that is symplectomorphic to $\BAR B^2(C-3)$. We define
\begin{equation}\label{eq:wt K pm}\wt K^\pm:=\wt\psi(K^\pm).\end{equation}
Since $\wt\psi$ is a symplectic embedding, $\wt K^\pm$ is symplectomorphic to $\BAR B^2(C-3)$.

Let $\wt\phi:\wt M\to\wt M$ be a continuous lift of $\phi_H^1$. For every $c\in\R$ we define the shift map $s_c:\R\x(-a,a)\to\R\x(-a,a)$ by $s_c(q,p):=(q+c,p)$. We denote by $\pr':\R\x(-a,a)\to(-a,a)$. It follows from the equality $\pi\circ\wt\phi=\phi_H^1\circ\pi$ and (\ref{eq:pi wt psi},\ref{eq:H}) that there exists an integer $N$, such that
%\begin{equation}\label{eq:wt phi wt psi}
\[\wt\psi^{-1}\circ\wt\phi\circ\wt\psi=s_N\circ\phi_{f\circ\pr'}^1.\]
%\end{equation}
Using (\ref{eq:f p},%\ref{eq:wt phi wt psi}
\ref{eq:U+},\ref{eq:K pm},\ref{eq:wt K pm}), it follows that $\wt\phi$ displaces $\wt K^+$ or $\wt K^-$. This proves \reff{claim:area H:H} and completes the proof of Claim \ref{claim:area H}.
\end{proof}
We choose $H$ and $\wt K^\pm$ as in part \reff{claim:area H:H} of this claim. Let $F\in C^\infty_c\big([0,1]\x M\big)$ be such that
\[\phi_F^1=\phi_H^1.\]
The time-1 flow of $F\circ\pi$ is well-defined on $\wt M$ and lifts the flow $\phi_F^1$. Therefore, by the conclusion of part \reff{claim:area H:H} of Claim \ref{claim:area H} the map $\phi_{F\circ\pi}^1$ displaces $\wt K^+$ or $\wt K^-$.

Since $M$ admits a noncontractible embedded circle (namely $L$), it is not diffeomorphic to $S^2$. Using that $M$ is orientable, it follows that its universal cover is diffeomorphic to $\R^2$. Using Claim \ref{claim:area H}\reff{claim:area H:area} and a result of R.~Greene and K.~Shiohama \cite[Theorem 1]{GS}\footnote{This result is based on Moser isotopy.}, it follows that $(\wt M,\wt\om)$ is symplectomorphic to $\R^2$ with the standard form. Therefore $(\wt M,\wt\om)$ is geometrically bounded.

Since $\wt K^\pm$ is symplectomorphic to $\BAR B^2(C-3)$ and $\phi_{F\circ\pi}^1$ displaces $\wt K^+$ or $\wt K^-$, the sharp energy-Gromov-width inequality\footnote{See e.g.~\cite[Corollary 3]{SZSmall}.} therefore implies that
\[\VERT F\circ\pi\VERT\geq C-3.\]
Since $\VERT F\VERT=\VERT F\circ\pi\VERT$, it follows that
\[\Vert\phi_H^1\Vert\geq C-3.\]
Since $0\leq H\leq C$, we have
\[\big\Vert\phi_{H|U}^1\big\Vert\leq\VERT H|U\VERT\leq C.\]
(The flow $\phi_{H|U}^1$ is well-defined on $U$, since $H$ has support contained in $U$.) Since $C\geq4$ is arbitrary, the inequality \eqref{eq:Lip infty M U geq} follows. This proves Proposition \ref{prop:big asympt Lip} in the case \reff{prop:big asympt Lip:surf}.
\end{proof}
\begin{Rmk}\textnormal{The above proof technique is based on the proof of \cite[Lemma 5.7, p.~64]{LMHofer}.
}%end textnormal
\end{Rmk}

In the proof of Proposition \ref{prop:big asympt Lip} in the case \reff{prop:big asympt Lip:CP} with $n\geq2$ we will use the following. For every topological space $X$, abelian group $A$, and integer $k$ we denote by $H_k(X;A)$ the $k$-th homology of $X$ with coefficients in $A$.
\begin{lemma}[map on homology induced by inclusion of real projective space]\label{le:homol incl real proj} For every $n\in\N$ and $k\in\left\{0,\ldots,\lfloor\frac n2\rfloor\right\}$ the map 
\begin{equation}\label{eq:H k}H_{2k}\big(\RP^n;\Z/2\Z\big)\to H_{2k}\big(\CP^n;\Z/2\Z\big)\end{equation}
induced by the canonical inclusion $\RP^n\to\CP^n$ does not vanish.
%on homology with $\Z/2\Z$-coefficients induced by the canonical inclusion
\end{lemma}
\begin{proof}[Proof of Lemma \ref{le:homol incl real proj}] We denote by $[\cdot]:\C^{n+1}\wo\{0\}\to\CP^n$ the canonical projection. We denote by $\mathbf{0}$ the origin in $\C^{n-2k}$ and define
\begin{align*}X:=\big\{&[x,\mathbf{0}]\,\big|\,x\in\R^{2k+1}\wo0\big\}\\
Y:=\big\{&\big[z_0,z_1,iz_1,\ldots,z_k,iz_k,z_{k+1},z_{k+2},\ldots,z_{n-k}\big]\,\big|\\
&(z_0,\ldots,z_{n-k})\in\C^{n-k+1}\wo0\big\}.
\end{align*}
These sets are closed real submanifolds of $\CP^n$ \footnote{They are diffeomorphic to $\RP^{2k}$ and $\CP^{n-k}$, respectively.} %end footnote
Denoting by $\mathbf{0}$ the origin in $\C^n$, we have
\[X\cap Y=\{[1,\mathbf{0}]\}.\]
This intersection is transverse, as follows from a calculation in standard charts. %note: The dimensions are complementary.
It follows that $X$ represents a nonzero $\Z/2\Z$-homology class. Since it is the image of the submanifold $\RP^{2k}$ of $\RP^n$ under the canonical inclusion $\RP^n\to\CP^n$, the statement of Lemma \ref{le:homol incl real proj} follows.
\end{proof}
\begin{proof}[Proof of Proposition \ref{prop:big asympt Lip} in the \textbf{case} \reff{prop:big asympt Lip:CP}]\setcounter{claim}{0} Let $(M,\om,U)$ be as in \reff{prop:big asympt Lip:CP}. We denote $L:=\RP^n\sub\CP^n$, respectively. Let $C\in(0,\infty)$. We choose a function $H\in C^\infty_c(U)$ such that
\[\int_UH\om^n=0,\quad -1\leq H\leq C,\quad H=C\textrm{ on }L.\]
It follows that 
\begin{equation}\label{eq:phi n H U}\Vert\phi_H^1\Vert\leq\VERT H\VERT=\max_MH-\min_MH\leq C+1.\end{equation}
(Recall that we use the abbreviated notation
\[\Vert\cdot\Vert:=\Vert\cdot\Vert^U_c:\Hamc(U)\to\R.)\]
\begin{claim}\label{claim:geq C} If $n=1$ then we have
\begin{equation}\label{eq:Vert phi M -pi}\Vert\wt{\phi_H^1}\Vert\geq C-\pi.\footnote{Here the tilde is defined as in \eqref{eq:phi wt phi}.}
\end{equation}
Otherwise we have
\begin{equation}\label{eq:Vert phi M}\Vert\wt{\phi_H^1}\Vert\geq C.
\end{equation}
\end{claim}
Since $C>0$ is arbitrary, the inequality \eqref{eq:Lip infty M U geq} and therefore the equality \eqref{eq:Lip infty M U 1} follow from this claim and \eqref{eq:phi n H U}.
\begin{proof}[Proof of Claim \ref{claim:geq C}] Consider the \textbf{case} in which $n=1$. The submanifold $L=\RP^1$ of $\CP^1$ is a stem in the sense of the definition on p.~775 in \cite{EPRigid}. Therefore, by \cite[Theorems 1.8 and 1.4]{EPRigid}, $L$ is stably non-displaceable. By \cite[7.2.A]{PoGeo} %published version: p.~52
the fundamental group of $\Ham(\CP^1)$ \footnote{$\Ham(M)$ denotes the group of Hamiltonian diffeomorphisms of $M$, which agrees with $\Hamc(M)$ if $M$ is closed.} %end footnote
is isomorphic to $\Z_2$, and its nontrivial element $\ga$ is induced by the 1-turn rotation, where we view $\CP^1$ as the sphere $S^2$. This element has norm $\nu(\ga)$ (defined as in \cite[Definition 7.3.A]{PoGeo}) equal to $\pi$. %previously used corollary on p.~66 to Theorem 9.1.A in the published version
Hence inequality \eqref{eq:Vert phi M -pi} follows from \cite[Theorem 7.4.A]{PoGeo}, using Definition 7.3.A in that book and the facts $\int_UH\om^n=0$ and $H=C$ on $L$.

Consider now the \textbf{case} in which $n\geq2$. To see that \eqref{eq:Vert phi M} holds, we denote by 
\[\mu:\Ham(\CP^n)\to\R\]
the Floer homological Calabi quasi-morphism of $\CP^n$ associated with the fundamental class $[\CP^n]$. See \cite[Sections 3.4 and 4.3]{EPCal}.\footnote{The construction of the map $\mu$ as in \cite[Sections 3.4 and 4.3]{EPCal} involves a unity of a factor in a splitting of $\QHev(M)$, the even-dimensional quantum homology. %over $\C[[s]$ even degree part of the quantum homology
(See \cite[p.~1654]{EPCal}.) Since $\QHev(\CP^n)$ is a field, $[\CP^n]$ is indeed such a unity.} %end footnote
By \cite[Corollary 3.6]{EPCal} $\mu$ satisfies the bound
\begin{equation}\label{eq:mu phi}\Vert\phi\Vert\geq\frac{|\mu(\phi)|}{\int_{\CP^n}\om^n},\quad\forall\phi\in\Ham(\CP^n).
\end{equation}
We define
\begin{equation}\label{eq:ze}\ze:C^\infty(\CP^n)\to\R,\quad\ze(F):=\frac{\int_{\CP^n}F\om^n-\mu(\phi_F^1)}{\int_{\CP^n}\om^n}.\end{equation}
Real projective space $\RP^n$ is a closed monotone Lagrangian submanifold of $\CP^n$ with minimal Maslov number $N_{\RP^n}$ equal to $n+1$. This follows from \cite[Examples.~(i), p.~954]{Oh}. Since $n\geq2$, by Lemma \ref{le:homol incl real proj} the map \eqref{eq:H k} does not vanish for $k=1$. Since $2>\dim(\RP^n)+1-N_{\RP^n}=0$, this means that $\RP^n$ satisfies the Albers condition, as defined in \cite[p.~785]{EPRigid}. Therefore, by \cite[Theorem 1.17]{EPRigid} $\RP^n$ is $[\CP^n]$-heavy. (See \cite[Definition 1.3, p.~779]{EPRigid}.)

We define $\wt H:\CP^n\to\R$ by $\wt H(x):=H(x)$, if $x\in U$, and $\wt H(x):=0$, otherwise. Since $\RP^n$ is $[\CP^n]$-heavy and $\wt H=C$ on $\RP^n$, it follows that
\[\ze(\wt H)\geq C.\]
Combining this with the equality $\int\wt H\om^n=0$ and the definition \eqref{eq:ze} of $\ze$, we obtain 
\[-\frac{\mu(\phi_{\wt H}^1)}{\int_{\CP^n}\om^n}\geq C.\]
Combining this with the bound \eqref{eq:mu phi}, inequality \eqref{eq:Vert phi M} follows. This proves Claim \ref{claim:geq C}.
\end{proof}

This completes the proof of inequality \eqref{eq:Lip infty M U geq} and hence of equality \eqref{eq:Lip infty M U 1}. This proves Proposition \ref{prop:big asympt Lip} in the case \reff{prop:big asympt Lip:CP}.
\end{proof}
\begin{Rmks}\textnormal{
\begin{itemize}\item The above proof was suggested to us by L.~Polterovich (private communication).
\item The map $\ze$ defined in \eqref{eq:ze}, is a symplectic quasi-state. See \cite{EPQuasi}, definition (4) on p.~84 and the discussion afterwards.
\end{itemize}
}%end textnormal
\end{Rmks}
\subsection{Relative Hofer diameter}\label{subsec:rel Hofer diam}
In this subsection we explain the remark made after \eqref{eq:Diam c}. The diameter of a pseudo-distance function $d$ on a set $X$ is by definition the number 
\[\diam(d):=\sup\big\{d(x,y)\,\big|\,x,y\in X\big\}.\]
Let $(M,\om)$ be a symplectic manifold and $U\sub M$ an open subset. We can view $\Diam(U,M)$ (defined in \eqref{eq:Diam c}) as such a diameter, as follows. Let $G$ be a group. By a \emph{semi-norm}\label{semi-norm} on $G$ we mean a map $\Vert\cdot\Vert:G\to[0,\infty]$ such that
\begin{eqnarray}
\nn&\Vert\one\Vert=0,&\\
\nn&\Vert g^{-1}\Vert=\Vert g\Vert,&\\
\nn&\Vert gh\Vert\leq\Vert g\Vert+\Vert h\Vert,&
\end{eqnarray}
for every $g,h\in G$. We call the last of these conditions the \emph{triangle inequality}. We call $\Vert\cdot\Vert$ a \emph{norm} iff it is also nondegenerate, i.e., for every $g\in G$ it satisfies
\[\Vert g\Vert=0\then g=\one.\]
Every semi-norm $\Vert\cdot\Vert$ on $G$ gives rise to a pseudo-distance function $d(\Vert\cdot\Vert)$ on $G$ via
\[d(\Vert\cdot\Vert)(g,h):=\Vert g^{-1}h\Vert.\]
The diameter of $d(\Vert\cdot\Vert)$ is given by
\[\diam(d(\Vert\cdot\Vert))=\sup_{g\in G}\Vert g\Vert.\]
Consider now the canonical extension homomorphism $E:\Hamc(U)\to\Hamc(M)$ given by \eqref{eq:phi wt phi}. The map $\Vert\cdot\Vert^M_c\circ E:\Hamc(U)\to[0,\infty)$ is a norm. (Nondegeneracy follows from Theorem 1.1 in the article \cite{LMGeo} of D.~McDuff and F.~Lalonde.) The relative Hofer-diameter of $U$ in $M$ is given by the diameter of the distance function induced by this norm,
\[\Diam(U,M)=\diam\big(d\big(\Vert\cdot\Vert^M_c\circ E\big)\big).\]
\section{Proof of Proposition \ref{prop:H H'} (action for concatenated Hamiltonian)}\label{sec:proof:prop:H H'}
For the proof of Proposition \ref{prop:H H'}, we need the following. Let $(M,\om)$ be a symplectic manifold. For a function $H\in C^\infty_c([0,1]\x M)$ and $x_0\in M$ we denote
\begin{equation}\label{eq:D H x 0}\D^H_{x_0}:=\big\{u\in C^\infty(\D,M)\,\big|\,u(e^{2\pi it})=\phi_H^t(x_0),\,\forall t\in[0,1]\big\}.\end{equation}
\begin{lemma}[concatenated Hamiltonian and symplectic action]\label{le:Phi} Let $H,H'\in C^\infty_c\big([0,1]\x M\big)$ be such that $H_t,H'_t=0$ for $t$ in some neighbourhood of $\{0,1\}$, and defining $X:=\bigcup_{t\in[0,1]}\supp H_t$, condition \eqref{eq:phi H'} is satisfied. For every $x_0\in M$ there exists a bijection 
\begin{equation}\label{eq:Phi D}\Phi:\D^{H'}_{x_0}\to\D^{H\#H'}_{x_0},
\end{equation}
such that 
\begin{equation}\label{eq:int D Phi}\int_\D\Phi(u')^*\om=\int_\D{u'}^*\om,\quad\forall u'\in\D^{H'}_{x_0}.
\end{equation}
\end{lemma}
\begin{proof}[Proof of Lemma \ref{le:Phi}]\setcounter{claim}{0} We define the map $\Phi$ as follows. By hypothesis, there exists $\eps>0$ such that $H_t,H'_t=0$ for $t\in[0,2\eps]\cup[1-2\eps,1]$. We choose a diffeomorphism $\phi:\D\to\D$ such that 
\begin{equation}\label{eq:phi e}\phi(e^{2\pi it})=e^{2\pi i(2t-1)},\quad\forall t\in\left[\frac12+\eps,1-\eps\right].\end{equation}
Assume that $u'\in\D^{H'}_{x_0}$. We define 
\begin{equation}\label{eq:v u'}v:=\Phi(u'):=u'\circ\phi:\D\to M.\end{equation}
\begin{claim}\label{claim:v} We have $v\in\D^{H\#H'}_{x_0}$.
\end{claim}
\begin{proof}[Proof of Claim \ref{claim:v}] Since $H'_t=0$ for $t\in[0,2\eps]\cup[1-2\eps,1]$, we have 
\begin{equation}\label{eq:u' e}u'(e^{2\pi it})=\phi_{H'}^t(x_0)=x_0,\quad\forall t\in[-2\eps,2\eps].
\end{equation}
It follows from \eqref{eq:phi e} that 
\[\phi(e^{2\pi it})\in\big\{e^{2\pi it'}\,\big|\,t'\in[-2\eps,2\eps]\big\},\quad\forall t\in\left[0,\frac12+\eps\right]\cup\big[1-\eps,1\big].\]
Combining this with (\ref{eq:v u'},\ref{eq:u' e}), it follows that 
\begin{equation}\label{eq:v e 2 pi}v(e^{2\pi it})=x_0,\quad\forall t\in\left[0,\frac12+\eps\right]\cup[1-\eps,1].\end{equation}
Therefore, using again \eqref{eq:u' e}, we have
\begin{equation}\label{eq:v H H'}v(e^{2\pi it})=\phi_{H'}^{2t-1}(x_0)=\phi_{H\#H'}^t(x_0),\quad\forall t\in\left[\frac12,\frac12+\eps\right]\cup[1-\eps,1].
\end{equation}
Furthermore, it follows from \eqref{eq:phi H'} that 
\begin{equation}\label{eq:x 0 X}x_0\not\in X=\bigcup_{t\in[0,1]}\supp H_t. 
\end{equation}
This implies that 
\begin{equation}\label{eq:phi H t x 0 x 0}\phi_H^t(x_0)=x_0,\quad\forall t\in[0,1].\end{equation} 
Combining this with \eqref{eq:v e 2 pi}, it follows that 
\begin{equation}\label{eq:v e}v(e^{2\pi it})=\phi_H^{2t}(x_0)=\phi_{H\#H'}^t(x_0),\quad\forall t\in\left[0,\frac12\right].\end{equation}
Finally, it follows from (\ref{eq:phi e},\ref{eq:v u'}) and the fact $u'\in\D^{H'}_{x_0}$, that 
\[v(e^{2\pi it})=\phi_{H'}^{2t-1}(x_0)=\phi_{H\#H'}^t(x_0),\quad\forall t\in\left[\frac12+\eps,1-\eps\right].\]
Combining this with (\ref{eq:v H H'},\ref{eq:v e}), it follows that $v\in\D^{H\#H'}_{x_0}$. This proves Claim \ref{claim:v}.
\end{proof}
A similar argument shows that
\[v\circ\phi^{-1}\in\D^{H'}_{x_0},\quad\forall v\in\D^{H\#H'}_{x_0}.\]
It follows that the map $\Phi$ is a bijection. 

Equality \eqref{eq:int D Phi} follows from \eqref{eq:v u'}, using that $\phi$ is orientation preserving. This proves Lemma \ref{le:Phi}.
\end{proof}
\begin{proof}[Proof of Proposition \ref{prop:H H'}]\setcounter{claim}{0}\label{proof:prop:H H'} Statement \reff{prop:H H':PP} %equality \eqref{eq:PP circ}
follows from Lemma \ref{le:Phi}. We prove statement \reff{prop:H H':A}. Assume that $(M,\om)$ is aspherical, and that $x_0\in\PP^\circ(H')$. It follows from the definition of $H\#H'$ that
\begin{equation}\label{eq:int H H'}\int_0^1(H\#H')_t\circ\phi_{H\#H'}^t(x_0)dt=\int_0^1H_t\circ\phi_H^t(x_0)dt+\int_0^1H'_t\circ\phi_{H'}^t(x_0)dt.
\end{equation}
Furthermore, it follows from \eqref{eq:phi H'} that 
\begin{equation}\label{eq:x 0 not in X} x_0\not\in X=\bigcup_{t\in[0,1]}\supp H_t. 
\end{equation}
This implies that $\phi_H^t(x_0)=x_0$, for every $t\in[0,1]$. Hence, using \eqref{eq:x 0 not in X} again, it follows that 
\begin{equation}\label{eq:H t phi H t x 0}H_t\circ\phi_H^t(x_0)=0,\quad\forall t\in[0,1].
\end{equation}
We choose a map $\Phi$ as in Lemma \ref{le:Phi} and a map $u'\in\D^{H'}_{x_0}$. (defined as in \eqref{eq:D H x 0}). The claimed equality \eqref{eq:A H H'} is a consequence of (\ref{eq:A H x0},\ref{eq:int H H'},\ref{eq:H t phi H t x 0},\ref{eq:int D Phi}). This proves \reff{prop:H H':A} and completes the proof of Proposition \ref{prop:H H'}.
\end{proof}

\bibliographystyle{amsalpha}
\bibliography{amsj,references}

\end{document}